
\documentclass[british]{article}[11pt]
\pdfoutput=1%
\usepackage[T1]{fontenc}%
\usepackage[utf8]{inputenc}%
\usepackage{babel}%
\usepackage{csquotes}%
\usepackage{amssymb}%
\usepackage{mathtools}%
\usepackage{amsthm}%
\usepackage{microtype}%
\usepackage{enumitem}%
\renewcommand{\epsilon}{\varepsilon}

\usepackage{verbatim}%

\usepackage[a4paper,
            left=1.4in,
            right=1.4in,
            top=1.3in,
            bottom=1.3in]{geometry}
            
\raggedbottom

\usepackage[backend=biber,doi=false,url=false,isbn=false,date=year,
giveninits=true,maxbibnames=99,sortcites,hyperref=true,backref=true]{biblatex}
\renewbibmacro{in:}{%
  \ifentrytype{article}{}{\printtext{\bibstring{in}\intitlepunct}}}
\DeclareFieldFormat[article,periodical]{volume}{\mkbibbold{#1}}%
\DeclareFieldFormat[article,periodical,unpublished]{citetitle}{#1}
\DeclareFieldFormat[article,periodical,unpublished]{title}{#1}
\DeclareFieldFormat{pages}{#1}
\DeclareFieldFormat[inproceedings]{title}{#1\isdot}
\DeclareFieldFormat[misc]{title}{#1\isdot}
\DeclareFieldFormat[misc,article]{note}{#1\addcomma}

\AtEveryBibitem{
  \clearfield{number}}
\addbibresource{IntermediateArxiv2.bib}

\usepackage[pdfencoding=auto,%
psdextra%
]{hyperref}%

\theoremstyle{plain}
\newtheorem{thm}{Theorem}[section]
\newtheorem{lemma}[thm]{Lemma}
\newtheorem{cor}[thm]{Corollary}
\newtheorem{prop}[thm]{Proposition}

\theoremstyle{definition}
\newtheorem{defn}[thm]{Definition}
\newtheorem{example}[thm]{Example}

\numberwithin{equation}{section}

\begin{document}

\title{Generalised intermediate dimensions}
\date{}

\author{Amlan Banaji\footnote{
\textsc{Department of Mathematical Sciences, Loughborough University, Loughborough, LE11 3TU, United Kingdom} \\
 \textit{Email:} \texttt{A.F.Banaji@lboro.ac.uk}}}

\maketitle

\begin{abstract}
We introduce a family of dimensions, which we call the $\Phi$-intermediate dimensions, that lie between the Hausdorff and box dimensions and generalise the intermediate dimensions introduced by Falconer, Fraser and Kempton. This is done by restricting the relative sizes of the covering sets in a way that allows for greater refinement than in the definition of the intermediate dimensions. 
We also extend the theory from Euclidean space to a wider class of metric spaces. We prove that these dimensions can be used to `recover the interpolation' between the Hausdorff and box dimensions of compact subsets for which the intermediate dimensions are discontinuous at $\theta=0$, thus providing finer geometric information about such sets. 
We prove continuity-like results involving the Assouad and lower dimensions, which give a sharp general lower bound for the intermediate dimensions that is positive for all $\theta \in (0,1]$ for sets with positive box dimension. We also prove H\"older distortion estimates, a mass distribution principle, and a Frostman type lemma, which we use to study dimensions of product sets. 
\vspace{0.2cm}

\emph{Mathematics Subject Classification 2020}: 28A80 (Primary), 28A78 (Secondary)

\emph{Key words and phrases}: intermediate dimensions, $\Phi$-intermediate dimensions, Hausdorff dimension, box dimension, dimension interpolation
\end{abstract}

\vspace{-0.5cm}

\tableofcontents

\section{Introduction} 

When studying the geometry of fractal subsets of a metric space, it is common to consider different notions of dimension, which attempt to quantify the extent to which the set fills up space at small scales. 
Two of the most familiar are the Hausdorff and box dimensions. 
For many natural sets these differ, indicating (intuitively) that the set in question has some inhomogeneity. The key difference between these two dimensions it that in the definition of box dimension the covering sets are required to be of equal size, but for Hausdorff dimension there is no such restriction. 
In~\cite{Falconer2020}, Falconer, Fraser and Kempton introduced a family of dimensions, called the \emph{intermediate dimensions}, which depend on a parameter $\theta \in [0,1]$, by insisting that the sizes of the covering sets lie in intervals of the form $[\delta^{1/\theta},\delta]$. The Hausdorff and box dimensions are the two extreme cases $\theta = 0$ and $1$, respectively. The intermediate dimensions have been studied in~\cite{Banaji2021moran,Falconer2021,
Tan2020,Burrell2022spirals,Falconer2021-2,Daw2021} and other works. 
For classes of fractal sets such as Bedford--McMullen carpets~\cite{Banaji2021bedford} and infinitely generated self-conformal sets~\cite{Banaji2021}, they have been computed explicitly. 

For every set, the intermediate dimensions are continuous at each $\theta \in (0,1]$. For many sets, such as Bedford--McMullen carpets (see~\cite[Section~4]{Falconer2020} and \cite{Banaji2021bedford}) and polynomial sequences (see~\cite[Proposition~3.1]{Falconer2020}), they are also continuous at $\theta = 0$, so fully interpolate between the Hausdorff and box dimensions. %
Continuity of the intermediate dimensions at $\theta = 0$ has powerful consequences, in particular for the box dimensions of projections of the set~\cite{Burrell2021} and images of the set under stochastic processes such as fractional Brownian motion~\cite{Burrell2022brownian}. 
However, for many sets the intermediate dimensions are discontinuous at $\theta=0$, or even constant at the value of the box dimension, in which case they give very little information about the set. One such set which is also compact is $\{0\} \cup \{\,\frac{1}{\log n} : n \in \mathbb{N},n \geq 3 \,\}$. In this paper, we introduce the $\Phi$-intermediate dimensions, by restricting the sizes of the covering sets to lie in a wider class of intervals of the form $[\Phi(\delta),\delta]$ for more general functions $\Phi$. These dimensions give even more refined geometric information than the intermediate dimensions about sets for which the intermediate dimensions are discontinuous at $\theta = 0$. %
Indeed, in what is perhaps the most important result of this paper (Theorem~\ref{recoverinterpolation}), we demonstrate that if a set is compact then there will always be a family of functions~$\Phi$ which interpolate all the way between the Hausdorff and box dimensions. 
While many results for the $\Phi$-intermediate dimensions are similar to results for the intermediate dimensions, others, such as the H{\"o}lder distortion estimates in Theorem~\ref{holder}, are rather different. 
We believe that the results of this paper demonstrate that the $\Phi$-intermediate dimensions give rise to a rich and workable theory in their own right. 
It is natural to ask whether the potential-theoretic methods in \cite{Burrell2021,Burrell2022brownian} can be adapted to study the $\Phi$-intermediate dimensions.  Feng~\cite{FengArxivintermediate} has recently shown that this is indeed the case, obtaining information about dimensions of images of sets under projections and fractional Brownian motion if for all $\epsilon > 0$ the function $\Phi$ satisfies $\delta^{\epsilon} \log \Phi(\delta) \to 0$ as $\delta \to 0$. 

The intermediate dimensions are an example of \emph{dimension interpolation}, an area which was introduced relatively recently but has gathered significant interest. For a survey of this topic we refer the reader to~\cite{Fraser2021-1}. The idea is to consider two different notions of dimension and find a geometrically meaningful family of dimensions which lie between them and share some characteristics of both, but provide more information about sets than either does in isolation. The hope is that, as well as being interesting in its own right, dimension interpolation can help illuminate why for some sets the two endpoint dimensions can give different values. A different example of dimension interpolation is the Assouad spectrum, introduced by Fraser and Yu in~\cite{Fraser2018-2}, which lies between the upper box and Assouad dimensions, giving information about the `thickest' part of the set. A more general class of dimensions were also introduced in~\cite{Fraser2018-2}, greatly developed by Garc{\'{i}}a, Hare and Mendivil in~\cite{Garcia2020}, and further studied in~\cite{Garcia2019-2,Troscheit2019,Hare2022randomass,
HarePreprintrandomass2}. 
They are defined by fixing the relative scales in more general ways than for the Assouad spectrum, thus giving more refined geometric information about sets whose quasi-Assouad dimension is less that the Assouad dimension. These Assouad-like dimensions were part of our original motivation for considering the $\Phi$-intermediate dimensions. 

\subsection{Summary and discussion of main results}
In Section~\ref{prelimsection}, we introduce the notation and the types of metric spaces that we work with, and make some standing assumptions to reduce repetition. We also define the notions of dimension that we will need. 

In Section~\ref{ctysection}, we give relationships between the different notions of dimension (Propositions~\ref{basicbounds} and~\ref{compareintermediate}). In Theorem~\ref{maincty} and Proposition~\ref{zerocontinuityprop} we prove quantitative %
continuity-like properties for the $\Phi$-intermediate dimensions, which intuitively say that if two functions $\Phi$ and $\Phi_1$ are `close' to each other then the dimensions of subsets do not differ too much. Interestingly, the precise bounds depend on the Assouad and lower dimensions of the set, which give information about its extremal scaling properties. From this result we deduce a condition for the $\Phi$- and $\Phi_1$-intermediate dimensions to coincide for all subsets with finite Assouad dimension (Proposition~\ref{hardcomparisoncor}~(ii)). Specialising to the $\theta$-intermediate dimensions gives a continuity result (Theorem~\ref{intermediatects}) and sharp general lower bound (Proposition~\ref{ffkgeneralbounds}) which are proved directly in~\cite{Banaji2021moran} and improve bounds in \cite{Falconer2020,Falconer2021-2}. %
 Notably, the lower bound is strictly positive for all $\theta\in (0,1]$ if the box dimension of the set is positive; there is a `mutual dependency' (Proposition~\ref{mutual}) between the box and intermediate dimensions (as in~\cite[(14.2.7)]{Falconer2021-2}). %

In Section~\ref{holdersection} we prove H{\"o}lder distortion estimates for the $\Phi$-intermediate dimensions (Theorem~\ref{holder}) which, interestingly, are different from the standard $\dim f(F) \leq \alpha^{-1}\dim F$ bound for $\alpha$-H{\"o}lder images which holds for the Hausdorff, box and $\theta$-intermediate dimensions. The estimates imply bi-Lipschitz stability (Corollary~\ref{philipschitz}), which is an important property that most notions of dimension satisfy. This means that the $\Phi$-intermediate dimensions provide yet another invariant for the classification of subsets up to bi-Lipschitz image. 

In Section~\ref{masssection} we prove a mass distribution principle (Lemma~\ref{massdistprinc}) and a converse, a Frostman type lemma (Lemma~\ref{frostman}) for the $\Phi$-intermediate dimensions. The latter is an example of where the extension from Euclidean space to the more general metric spaces in which we work is non-trivial; we use an analogue of the dyadic cubes in general doubling metric spaces given in~\cite{Hytonen2010}. The mass distribution principle and Frostman type lemma combine to give Theorem~\ref{massfrostman}, a useful alternative definition of the $\Phi$-intermediate dimensions in terms of measures. We use this characterisation to prove Theorem~\ref{producttheorem} on the dimensions of product sets, giving new bounds in terms of the dimensions of the marginals, one of which we improve further in the case of self-products. In particular, ($\underline{\dim}^\Phi,\overline{\dim}_\mathrm{B}$) and ($\underline{\dim}_\theta,\overline{\dim}_\mathrm{B}$) satisfy the inequalities~\eqref{dimpairineq} that many `dimension pairs' satisfy, although our upper bound for $\overline{\dim}^\Phi (E \times F)$ is different to what might be expected. 
We also use the mass distribution principle to prove in Proposition~\ref{finitestability} that the lower versions of the intermediate and $\Phi$-intermediate dimensions are not finitely stable (in contrast to the upper versions).

Proposition~\ref{finitestability} also gives an example of a set to which the important result Theorem~\ref{recoverinterpolation} can be applied. Theorem~\ref{recoverinterpolation} shows that for every compact subset of an appropriate space there is a family of functions $\Phi$ which fully interpolate between the Hausdorff and box dimensions. Thus the $\Phi$-intermediate dimensions give finer geometric information about sets whose intermediate dimensions are discontinuous at $\theta=0$ by `recovering the interpolation' between Hausdorff and box dimension. Moreover, there exists a single family of $\Phi$ which interpolate for both the upper and lower versions of the dimensions, and whose dimensions vary monotonically for all sets, but in Proposition~\ref{interpolatenotcts} we show that it might not be possible to ensure that the dimensions vary continuously for all other sets.

\section{Preliminaries and definitions of dimensions}\label{prelimsection}%

 For $x \in X$ and $\delta>0$, we denote the open balls in $X$ and $F$ respectively by 
 \begin{gather*} 
 B(x,\delta) = B^X(x,\delta) \coloneqq \{ \, y \in X : d(x,y) < \delta \, \} , \\*
 B^F(x,\delta) \coloneqq \{ \, y \in F : d(x,y) < \delta \, \}, 
 \end{gather*}
 noting that these sets might have diameter less than $2\delta$. 
 We denote by $N_\delta(F)$ the smallest integer such that there exist $x_1,\dotsc,x_{N_\delta(F)} \in F$ such that 
 \[ F \subseteq \bigcup_{i = 1}^{N_\delta(F)} B(x_i,\delta/2).\] 
 The subset $F$ is \emph{totally bounded} if $N_\delta (F) < \infty$ for all $\delta > 0$. 
  In the definitions in this section, we will use the convention that $\inf \varnothing = \inf \{ \infty \} = \infty$. 
 Recall the following definition. 
 \begin{defn}\label{originalboxdef}
 The \emph{upper} and \emph{lower box dimension} of a non-empty, totally bounded subset $F$ of a metric space are defined respectively by \[\overline{\dim}_\mathrm{B} F \coloneqq \limsup_{\delta \to 0^+} \frac{\log (N_\delta (F))}{-\log \delta}; \qquad \underline{\dim}_\mathrm{B} F \coloneqq \liminf_{\delta \to 0^+} \frac{\log(N_\delta(F))}{-\log \delta}.\]
 \end{defn}
If $F \subset \mathbb{R}^n$ then there is an alternative definition of upper box dimension, 
 \begin{equation}\label{boxdef}
 \begin{aligned} 
 \overline{\dim}_{\mathrm B} F = \inf \{ \, s \geq 0 : &\mbox{ for all } \epsilon >0 \mbox{ there exists } \delta_0 \in (0,1] \mbox{ such that for all } \delta \in (0,\delta_0) \\
 &\mbox{ there exists a cover } \{U_1,U_2,\dotsc\} \mbox{ of } F \mbox{ such that } |U_i| = \delta \\
  &\mbox{ for all } i, \mbox{ and } \sum_i |U_i|^s \leq \epsilon \, \},
 \end{aligned}
 \end{equation}
 see~\cite[Chapter 2]{Falconer2014}. 
 One can define the Hausdorff dimension without using Hausdorff measure by 
\begin{equation}\label{hausdorffdef}
\begin{aligned} \dim_\mathrm{H} F = \inf \{ \, s \geq 0 : &\mbox{ for all } \epsilon >0 \mbox{ there exists a finite or countable cover } \\*
& \{U_1,U_2,\dotsc\} \mbox{ of } F \mbox{ such that } \sum_i |U_i|^s \leq \epsilon \,\},
\end{aligned}
\end{equation}%
see~\cite[Section~3.2]{Falconer2014}. 
Motivated by the similarity between~\eqref{boxdef} and~\eqref{hausdorffdef}, Falconer, Fraser and Kempton~\cite{Falconer2020} made the following definition, upon which our main Definition~\ref{maindefinition} for the $\Phi$-intermediate dimensions is based: 
\begin{defn}\label{d:intdimdef}
For $0 \leq \theta \leq 1$, the \emph{upper $\theta$-intermediate dimension} of a bounded set $F \subset \mathbb{R}^n$ is 
\begin{align*} \overline{\mbox{dim}}_\theta F = \inf \{ \, s \geq 0 : &\mbox{ for all } \epsilon >0 \mbox{ there exists } \delta_0 \in (0,1] \mbox{ such that for all } \delta \in (0,\delta_0) \\
&\mbox{ there exists a cover } \{U_1,U_2,\dotsc\} \mbox{ of } F \mbox{ such} \mbox{ that } \delta^{1/\theta}\leq |U_i| \leq \delta \\
&\mbox{ for all } i, \mbox{ and } \sum_i |U_i|^s \leq \epsilon \, \}.
\end{align*}
\end{defn}
Similarly, the \emph{lower $\theta$-intermediate dimensions} $\underline{\dim}_\theta F$ are defined in~\cite{Falconer2020}. It is also shown that for $F \subset \mathbb{R}^n$, the maps $\theta \mapsto \underline{\dim}_\theta F$ and $\theta \mapsto \overline{\dim}_\theta F$ are monotonically increasing in $\theta \in [0,1]$ and continuous in $\theta \in (0,1]$, but may be discontinuous at $\theta=0$. Banaji and Rutar~\cite{Banaji2021moran} have proved that a local derivative constraint gives a necessary and sufficient condition for a given function to be realised as the intermediate dimensions of a bounded subset of $\mathbb{R}^n$. 
 In this paper we often require the metric spaces we work with to satisfy certain properties. 
\begin{defn}\label{d:unifperf}%
For $c \in (0,1)$ we say a metric space $X$ is \emph{$c$-uniformly perfect} if for all $x \in X$ and $R \in \mathbb{R}$ such that $0 < R < |X|$, 
\[ B(x,R) \setminus B(x,cR) \neq \varnothing. \]
The space $X$ is \emph{uniformly perfect} if there exists $c \in (0,1)$ such that $X$ is $c$-uniformly perfect. 
\end{defn}
Intuitively, a metric space is uniformly perfect if it does not have islands which are very separated from the rest of the space. 

\begin{defn}
A metric space is said to be \emph{doubling} if there exists a constant $M \in \mathbb{N}$ (called the \emph{doubling constant}) such that for every $x \in X$ and $r>0$, there exists $x_1,\dotsc,x_M \in X$ such that $B(x,2r) \subseteq \bigcup_{i=1}^M B(x_i,r)$. 
\end{defn} 

The Assouad and lower dimensions, studied in detail in~\cite{Fraser2020}, are dual notions which give information about the `thickest' and `thinnest' part of a set respectively: 
\begin{defn}\label{assouaddef}
Suppose a subset $F$ of a metric space has more than one point. %
Then the \emph{Assouad dimension} of $F$ is defined by 
\begin{align*}
      \dim_\mathrm{A} F = \inf\{ \, 
      a : &\mbox{ there exists }C>0\mbox{ such that } N_r(B(x,R)\cap F) \leq C(R/r)^a \\
      &\mbox{ for all } x \in F \mbox{ and } 0<r<R \, \}. 
    \end{align*}
    The \emph{lower dimension} of $F$ is defined by
\begin{align*}
      \dim_\mathrm{L} F = \sup\{ \, 
      \lambda : &\mbox{ there exists } C>0\mbox{ such that } N_r(B(x,R)\cap F) \geq C(R/r)^\lambda \\
      &\mbox{ for all } x \in F \mbox{ and } 0<r<R\leq |F| \, \}. 
    \end{align*}
    \end{defn}
    
    In~\cite[Section~13.1.1]{Fraser2020} it is shown that a metric space $X$ with more than one point is uniformly perfect if and only if $0 < \dim_\mathrm{L} X$. Such a space cannot have any isolated points, so must be infinite. 
    It is also shown that a space $X$ is doubling if and only if $\dim_\mathrm{A} X < \infty$. 
    In this case we will see in Proposition~\ref{basicbounds} that all dimensions of every subset $F$ will be finite, as we will need to assume for many of the results in this paper. 
    A metric space is said to be \emph{Ahlfors regular} if there exists $s>0$, $C \geq 1$ and a Borel regular measure~$\mu$ supported on~$X$ such that $C^{-1} R^s \leq \mu(B_R) \leq C R^s$ for all closed balls $B_R$ of radius $0 < R < \mbox{diam}(X)$. By \cite[Corollary~14.15]{Heinonen2001metric}, every Ahlfors regular space with more than one point is uniformly perfect and doubling. 
    A familiar example of such a space is $\mathbb{R}^n$ with the Euclidean metric. 
    An example of such a space which is not bi-Lipschitz equivalent to any subset of $\mathbb{R}^n$ is the Heisenberg group with its usual Carnot-Carath{\'e}odory metric, see~\cite{LeDonne2015,Pansu1989,Semmes1996}. 
    
For the purposes of this paper, we make the following definition. 

 \begin{defn}\label{admissible}
 A function $\Phi \colon (0,\Delta) \to \mathbb{R}$ is \emph{admissible} if $\Phi$ is monotonic, $0<\Phi(\delta) \leq \delta$ for all $\delta \in (0,\Delta)$, and $\Phi(\delta)/\delta \to 0$ as $\delta \to 0^+$. 
 \end{defn}
 In some settings, for example when working with infinitely generated self-conformal sets in~\cite{Banaji2021}, it is convenient to assume that $\Phi(\delta)/\delta \to 0$ \emph{monotonically} as $\delta \to 0^+$. This is satisfied by many reasonable functions such as $\delta^{1/\theta}$ and $e^{-\delta^{-0.5}}$. 
  
  To minimise repetition, we make the following standing assumptions from this point onwards: 
  \begin{itemize}
    \item The letter $\Phi$ will represent an arbitrary admissible function (except in Proposition~\ref{p:nonadmissible} where we explore the conditions on $\Phi$). 
  \item The underlying metric space is denoted by $X$ (or sometimes $Y$), and will be assumed to have more than one point and be uniformly perfect. %
  The letter $c$ will usually denote the constant from Definition~\ref{d:unifperf}. %
  \item Subsets of $X$ are denoted by $F$ (or sometimes $E$ or $G$), and are assumed to be non-empty and totally bounded. 
  \end{itemize}
Using these conventions, and based on Definition~\ref{d:intdimdef}, we now make the main definition of this paper: 
\begin{defn}\label{maindefinition}
We define the \emph{upper $\Phi$-intermediate dimension} of a subset $F$ by 
\begin{align*} \overline{\dim}^\Phi F = \inf \{ \, s \geq 0 : &\mbox{ for all } \epsilon >0 \mbox{ there exists } \delta_0 \in (0,1] \mbox{ such that for all } \delta \in (0,\delta_0) \\
&\mbox{ there exists a cover } \{U_1,U_2,\dotsc\} \mbox{ of } F \mbox{ such that } \\
&\Phi(\delta)\leq |U_i| \leq \delta \mbox{ for all } i, \mbox{ and } \sum_i |U_i|^s \leq \epsilon \, \}.
\end{align*}
Similarly, we define the \emph{lower $\Phi$-intermediate dimension} of $F$ by 
\begin{align*} \underline{\dim}^\Phi F = \inf \{ \, s \geq 0 : &\mbox{ for all } \epsilon >0 \mbox{ and } \delta_0 \in (0,1] \mbox{ there exists } \delta \in (0,\delta_0) \mbox{ and a cover } \\
&\{U_1,U_2,\dotsc\} \mbox{ of } F \mbox{ such that } \Phi(\delta)\leq |U_i| \leq \delta \mbox{ for all } i,\\
&\mbox{ and } \sum_i |U_i|^s \leq \epsilon \,\}.
\end{align*}
If these two quantities coincide, we call the common value the $\Phi$\emph{-intermediate dimension} of $F$ and denote it by $\dim^\Phi F$. 
\end{defn}

In the above definition, the cover $\{U_i\}$ of $F$ is \emph{a priori} countable, but since it satisfies $0<\Phi(\delta)\leq |U_i|$ for all $i$, and $\sum_i |U_i|^s \leq \epsilon$, it must be finite. 
If $F$ were not totally bounded then the $\Phi$-intermediate dimensions of $F$ would be infinite according to Definition~\ref{maindefinition}. 
If $\theta \in (0,1)$ and $\Phi(\delta) = \delta^{1/\theta}$ for all $\delta \in [0,1]$, then $\overline{\dim}^\Phi F = \overline{\dim}_\theta F$ and $\underline{\dim}^\Phi F = \underline{\dim}_\theta F$ are the definitions of the \emph{upper} and \emph{lower intermediate dimensions} of $F$ at $\theta$, respectively. 
Set $\underline{\dim}_0 F = \overline{\dim}_0 F \coloneqq \dim_\mathrm{H} F$, $\underline{\dim}_1 F \coloneqq \underline{\dim}_\mathrm{B} F$, and $\overline{\dim}_1 F \coloneqq \overline{\dim}_\mathrm{B} F$. If $\underline{\dim}_\theta F = \overline{\dim}_\theta F$ then the common value is called the \emph{intermediate dimension} of $F$ at $\theta$ and is denoted by $\dim_\theta F$.

\section{Continuity and general bounds}\label{ctysection}

\subsection{The \texorpdfstring{$\Phi$}{Φ}-intermediate dimensions}\label{phiintermediatecty}

In this section we examine general bounds and continuity-like properties for the $\Phi$-intermediate dimensions. 
They satisfy the following inequalities, as with the intermediate dimensions. 

\begin{prop}\label{basicbounds}
For a subset $F$, 
\[ 0 \leq \dim_\mathrm{H} F \leq \underline{\dim}^\Phi F \leq \overline{\dim}^\Phi F \leq \overline{\dim}_\mathrm{B} F \leq \dim_\mathrm{A} F \leq \dim_\mathrm{A} X, \mbox{ and} \]
\[  \underline{\dim}^\Phi F \leq \underline{\dim}_\mathrm{B} F \leq \overline{\dim}_\mathrm{B} F. \]
\end{prop}

\begin{proof}
We first prove $\overline{\dim}^\Phi F \leq \overline{\dim}_\mathrm{B} F$. Recall that we denote by $c \in (0,1)$ a constant such that $X$ is $c$-uniformly perfect. Since $\Phi(\delta)/\delta \to 0$, there exists $\Delta \in (0,\min\{|X|,1\})$ such that $\Phi(\delta)/\delta < c/2$ for all $\delta \in (0,\Delta)$. 
Let $s > \overline{\dim}_\mathrm{B} F$ and $\epsilon > 0$. Let $t \in (\overline{\dim}_\mathrm{B} F, s)$, so we can reduce $\Delta$ further to assume that $\Delta < \epsilon^{\frac{1}{s-t}}$ %
and that for all $\delta \in (0,\Delta)$ there exists a cover of $F$ by $\delta^{-t}$ or fewer sets $\{U_i\}$, each having diameter at most $\delta$. We may assume without loss of generality that each $U_i$ intersects $F$. If $|U_i| \geq \delta/2$ then leave $U_i$ in the cover unchanged. If $|U_i| < \delta/2$, then fix $x_i \in U_i$ and $y_i \in B(x_i,\delta/2)\setminus B(x_i,c\delta/2)$; add the point $y_i$ to $U_i$, and call the resulting cover $\{V_i\}$. For each $i$, 
\[\Phi(\delta) \leq c\delta/2 \leq |V_i| \leq  \delta \]
by the triangle inequality. Moreover, 
\[\sum_i |V_i|^s \leq \delta^{-t}\delta^s < \delta_0^{s-t} < \epsilon. \]
Thus $\overline{\dim}^\Phi F \leq s$ by Definition~\ref{maindefinition}, so $\overline{\dim}^\Phi F \leq \overline{\dim}_\mathrm{B} F$, as required. 

The proof that $\underline{\dim}^\Phi F \leq \underline{\dim}_\mathrm{B} F$ is similar. %
Indeed, let $s' > \underline{\dim}_\mathrm{B} F$ and $\epsilon' > 0$. Let $t' \in (\underline{\dim}_\mathrm{B} F, s')$, so for all $\Delta' \in (0,\min\{(\epsilon')^{\frac{1}{s'-t'}},|X|,1\})$ there exists $\delta' \in (0,\Delta')$ and a cover of $F$ by $(\delta')^{-t'}$ or fewer sets, each having diameter at most $\delta'$. 
As above, we can use this cover to form a cover $\{V'_j\}$ which satisfies $\Phi(\delta') \leq |V'_j| \leq \delta'$ for all $j$ and $\sum_j |V'_j|^s < \epsilon'$. Therefore $\underline{\dim}^\Phi F \leq s'$, so $\underline{\dim}^\Phi F \leq \underline{\dim}_\mathrm{B} F$. 

 The inequalities $\dim_\mathrm{H} F \leq \underline{\dim}^\Phi F$, $\underline{\dim}^\Phi F \leq \overline{\dim}^\Phi F$ and $\underline{\dim}_\mathrm{B} F \leq \overline{\dim}_\mathrm{B} F$ follow directly from the definitions. The inequality $\overline{\dim}_\mathrm{B} F \leq \dim_\mathrm{A} F$ holds by fixing $R = |F|$ in Definition~\ref{assouaddef}. The inequality $\dim_\mathrm{A} F \leq \dim_\mathrm{A} X$ follows from Definition~\ref{assouaddef} since $F \subseteq X$. 
\end{proof}

We assume that the ambient metric space $X$ is uniformly perfect with more than one point, and that $\Phi(\delta)/\delta \to 0$ as $\delta \to 0^+$, to ensure that Proposition~\ref{basicbounds} will hold and to avoid cases like the two-point metric space, which would have infinite intermediate and $\Phi$-intermediate dimensions according to Definition~\ref{maindefinition}. There is no general relationship between the lower box dimension and the upper intermediate dimensions. 
It follows from Proposition~\ref{basicbounds} that if $F \subset \mathbb{R}^n$ is non-empty and bounded then $\underline{\dim}^\Phi F \leq \overline{\dim}^\Phi F \leq n$, and if in addition $F$ is open with respect to the Euclidean metric then $\underline{\dim}^\Phi F = \overline{\dim}^\Phi F = n$, as one would expect. 
The dimensions satisfy the following basic properties. 
\begin{prop}\label{unprovedprop}
  \hfill \begin{enumerate}[label=(\roman*)]
\item Both $\overline{\dim}^\Phi$ and $\underline{\dim}^\Phi$ are \emph{increasing for sets}: if $E \subseteq F$ then $\overline{\dim}^\Phi E \leq \overline{\dim}^\Phi F$ and $\underline{\dim}^\Phi E \leq \underline{\dim}^\Phi F$. 
\item Both $\overline{\dim}^\Phi$ and $\underline{\dim}^\Phi$ are \emph{stable under closure}: $\overline{\dim}^\Phi F = \overline{\dim}^\Phi \overline{F}$ and $\underline{\dim}^\Phi F = \underline{\dim}^\Phi \overline{F}$. 
\end{enumerate}
\end{prop}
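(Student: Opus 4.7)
My plan for both parts rests on a single observation that is made explicitly just after Definition~\ref{maindefinition}: any cover $\{U_i\}$ witnessing admissibility of a value $s$ must actually be \emph{finite}, because $|U_i| \geq \Phi(\delta) > 0$ for every $i$ while $\sum_i |U_i|^s \leq \epsilon$. This finiteness will matter only for part (ii).

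For part (i), I would argue that if $E \subseteq F$ then any cover of $F$ is automatically a cover of $E$. More precisely, fix any $s$ which is admissible in the definition of $\overline{\dim}^\Phi F$, in the sense that for every $\epsilon > 0$ the required $\delta_0$ and covers of $F$ exist. The very same collection $\{U_i\}$, with the same diameters and the same $s$-sum, covers $E$, so $s$ is admissible in the definition of $\overline{\dim}^\Phi E$ as well. Taking infima yields $\overline{\dim}^\Phi E \leq \overline{\dim}^\Phi F$. The argument for $\underline{\dim}^\Phi$ is identical, simply swapping the order of the quantifiers on $\epsilon$ and $\delta_0$ in the same way as in the definition.

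For part (ii), the inequality $\overline{\dim}^\Phi F \leq \overline{\dim}^\Phi \overline{F}$ (and the analogue for $\underline{\dim}^\Phi$) is immediate from (i) together with $F \subseteq \overline{F}$. For the reverse inequality I would take any admissible $s$ for $F$, and for the associated $\epsilon$ and $\delta$ pick a cover $\{U_1,\dots,U_k\}$ of $F$ with $\Phi(\delta) \leq |U_i| \leq \delta$ and $\sum_{i=1}^k |U_i|^s \leq \epsilon$; the cover is finite by the observation above. I would then replace each $U_i$ by its metric closure $\overline{U_i}$. Because the union is finite, closure commutes with it, so $\overline{F} \subseteq \overline{\bigcup_{i=1}^k U_i} = \bigcup_{i=1}^k \overline{U_i}$, and because $|\overline{U_i}| = |U_i|$ in any metric space, the new cover satisfies the same diameter bounds and has the same $s$-sum. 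Hence $s$ is admissible for $\overline{F}$, giving $\overline{\dim}^\Phi \overline{F} \leq \overline{\dim}^\Phi F$ and the same conclusion for the lower version.

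There is no genuine obstacle in the proof; the only subtlety is remembering that admissible covers are finite, which is what makes the closure step in (ii) legitimate. I expect each of the two parts to be handled in a short paragraph in the final write-up.
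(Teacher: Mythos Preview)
Your proposal is correct and is exactly the kind of argument the paper has in mind: the paper explicitly says the proofs ``are straightforward applications of the definitions, and are left for the reader,'' so there is no written proof to compare against beyond the expectation that one argues directly from Definition~\ref{maindefinition}. Your use of the finiteness of admissible covers (noted right after Definition~\ref{maindefinition}) to handle the closure step in (ii) is the only subtle point, and you have it right.
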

\begin{proof}
This is straightforward from the definition. 
\end{proof}

\begin{example}\label{dirichlet}
The set $F \coloneqq \mathbb{Q} \cap [0,1] \subset \mathbb{R}$ is countable, so $\dim_\mathrm{H} F = 0$, but $\underline{\dim}^\Phi F = \overline{\dim}^\Phi F = 1$ for every admissible $\Phi$, directly from Definition~\ref{maindefinition}. This demonstrates that: 
\begin{itemize}
\item The dimensions $\underline{\dim}^\Phi$ and $\overline{\dim}^\Phi$ are different from $\dim_\mathrm{H}$. 
\item There are subsets of $\mathbb{R}$, such as $F$, for which there does not exist a family of admissible functions for which the $\Phi$-intermediate dimensions interpolate between the Hausdorff and box dimensions of the set. This means that the assumption of compactness in Theorem~\ref{recoverinterpolation} cannot be removed in general. 
\item The dimensions $\underline{\dim}^\Phi$ and $\overline{\dim}^\Phi$ can take positive values for countable sets. 
\end{itemize}
\end{example}

We will need the following sufficient condition for the $\Phi$-intermediate dimension always to equal the box dimension. 
As an example, the function $\Phi(\delta) \coloneqq \frac{\delta}{-\log \delta}$ satisfies the assumptions of Proposition~\ref{whenequalsbox}. 
\begin{prop}\label{whenequalsbox}
Let $\Phi$ be an admissible function such that $\frac{\log \delta}{\log \Phi(\delta)} \to 1$ as $\delta \to 0^+$. Then for all subsets $F$, $\overline{\dim}^\Phi F = \overline{\dim}_\mathrm{B} F$ and $\underline{\dim}^\Phi F = \underline{\dim}_\mathrm{B} F$. 
\end{prop}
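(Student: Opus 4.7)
The plan is to use Proposition~\ref{basicbounds}, which already gives $\overline{\dim}^\Phi F \leq \overline{\dim}_\textup{B} F$ and $\underline{\dim}^\Phi F \leq \underline{\dim}_\textup{B} F$, so only the reverse inequalities need to be proved. The key observation is that the admissibility condition $\Phi(\delta) \leq |U_i|$ in Definition~\ref{maindefinition} provides a uniform \emph{lower} bound on the diameters of sets in any admissible cover, which forces an upper bound on the cardinality of the cover. When $\Phi(\delta)$ is logarithmically close to $\delta$, this cardinality bound is essentially what a $\delta$-cover gives, so we can recover the (upper or lower) box dimension.

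Concretely, given $s > \overline{\dim}^\Phi F$ and $\epsilon > 0$, Definition~\ref{maindefinition} guarantees that for all sufficiently small $\delta > 0$ there is a cover $\{U_i\}$ of $F$ with $\Phi(\delta) \leq |U_i| \leq \delta$ and $\sum_i |U_i|^s \leq \epsilon$. Since each term in the sum is at least $\Phi(\delta)^s$, the cover contains at most $N := \epsilon \, \Phi(\delta)^{-s}$ sets. Replacing each $U_i$ by an open ball of radius $\delta$ centred at an arbitrary point of $U_i$ yields a cover of $F$ by at most $N$ open balls of diameter at most $2\delta$, so
\[ N_{2\delta}(F) \leq \epsilon \, \Phi(\delta)^{-s}, \qquad \text{hence} \qquad \log N_{2\delta}(F) \leq \log \epsilon - s \log \Phi(\delta). \]

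Dividing by $-\log(2\delta)$ and taking the limsup as $\delta \to 0^+$, the term $\log \epsilon / (-\log(2\delta))$ vanishes. Because $\log(2\delta) / \log \delta \to 1$ together with the hypothesis $\log \delta / \log \Phi(\delta) \to 1$ gives $\log \Phi(\delta) / \log(2\delta) \to 1$, the remaining term converges to $s$. Thus $\overline{\dim}_\textup{B} F \leq s$; letting $s \to (\overline{\dim}^\Phi F)^+$ gives the desired inequality. For the lower case, take $s > \underline{\dim}^\Phi F$ and apply Definition~\ref{maindefinition} with $\delta_0 = 1/n$ to extract a sequence $\delta_n \to 0^+$ along which the same cover estimate holds; taking $\liminf_n \log N_{2\delta_n}(F) / (-\log(2\delta_n)) \leq s$ and $s \to (\underline{\dim}^\Phi F)^+$ yields $\underline{\dim}_\textup{B} F \leq \underline{\dim}^\Phi F$.

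There is no real obstacle here; the only point requiring care is that the hypothesis $\log \delta / \log \Phi(\delta) \to 1$ is precisely what is needed to absorb the $\Phi(\delta)^{-s}$ factor into a box-dimension estimate at scale $\delta$ (rather than scale $\Phi(\delta)$), and that the shift from $\delta$ to $2\delta$ produced by the ball-replacement step is logarithmically negligible.
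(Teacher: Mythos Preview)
Your proof is correct and uses essentially the same idea as the paper: both arguments exploit the lower bound $|U_i|\geq\Phi(\delta)$ to deduce that an admissible cover has at most $C\,\Phi(\delta)^{-s}$ sets, and then use the hypothesis $\log\delta/\log\Phi(\delta)\to 1$ to turn this into a box-dimension estimate. The paper packages this as a contradiction (choosing auxiliary $s<t$ and showing $N_\delta(F)\delta^t\to 0$), whereas you argue directly via $\log N_{2\delta}(F)/(-\log 2\delta)$; the only cosmetic wrinkle is that $|U_i|\leq\delta$ guarantees $U_i\subseteq B(x_i,2\delta)$ rather than $B(x_i,\delta)$, which changes $2\delta$ to $4\delta$ and is harmless in the limit.
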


\begin{proof}
We prove that $\overline{\dim}^\Phi F = \overline{\dim}_\mathrm{B} F$; the proof of $\underline{\dim}^\Phi F = \underline{\dim}_\mathrm{B} F$ is similar. %
Assume (for the purpose of obtaining a contradiction) that $\overline{\dim}^\Phi F < \overline{\dim}_\mathrm{B} F$, and let $s,t \in \mathbb{R}$ be such that $\overline{\dim}^\Phi F < s < t < \overline{\dim}_\mathrm{B} F$. Then for all sufficiently small $\delta$ there exists a cover $\{U_i\}$ of $F$ such that $\Phi(\delta) \leq |U_i| \leq \delta$ for all $i$, and $\sum_i |U_i|^s \leq 1$. Therefore 
\begin{equation*}
N_\delta(F) \delta^t \leq \sum_i \delta^t \frac{|U_i|^s|U_i|^{t-s}}{|U_i|^t} \leq \sum_i \delta^t \frac{|U_i|^s \delta^{t-s}}{(\Phi(\delta))^t}  \leq \left(\frac{\delta^{1+(t-s)/t}}{\Phi(\delta)}\right)^t, 
 \end{equation*}
 which converges to 0 as $\delta \to 0^+$. %
 This contradicts $t < \overline{\dim}_\mathrm{B} F$ and completes the proof. 
\end{proof}

We now consider continuity-like results for the $\Phi$-intermediate dimensions. 
The main such result is Theorem~\ref{maincty}, which roughly implies that if two admissible functions $\Phi$ and $\Phi_1$ are in a quantitative sense `close' to each other, then the $\Phi$ and $\Phi_1$-intermediate dimensions of sets whose Assouad dimension is not too large do not differ greatly. In a similar spirit, quantitative continuity results have been proven for the intermediate dimensions in $\mathbb{R}^n$, for example~\cite[Proposition~2.1]{Falconer2020},~\cite[(14.2.2)]{Falconer2021-2} and \cite[Theorem~2.6]{Banaji2021moran}. 
To obtain bounds involving the lower dimension, we will use the following definition: the \emph{lower dimension} of a Borel probability measure $\mu$ is  
\begin{align*}
 \dim_\mathrm{L} \mu \coloneqq \sup \left\{ \, \lambda \geq 0 : \right. &\left.\mbox{there exists } A > 0 \mbox{ such that if } 0 < r < R \leq |\mbox{supp}(\mu)| \right. \\*
&\left. \mbox{  and } x \in \mbox{supp}(\mu) \mbox{ then } \frac{\mu(B(x,R))}{\mu(B(x,r))} \geq A \left(\frac{R}{r}\right)^\lambda \right\}. 
\end{align*}
A measure $\mu$ is said to be \emph{doubling} if there exists $M \geq 1$, called the \emph{doubling constant}, such that $\mu(B(x,2r)) \leq M \mu(B(x,r))$ for all $x \in \mbox{supp}(\mu)$ and $r > 0$. 
For further details we refer the reader to~\cite[Section~4.1]{Fraser2020}. 
\begin{thm}\label{maincty}
Let $\Phi$ and $\Phi_1$ be admissible functions. Let $F$ be a subset satisfying $0 < \dim_\mathrm{A} F < \infty$,  and assume that $\overline{F}$ is complete. %
Suppose that $0 < \overline{\dim}^\Phi F < \dim_\mathrm{A} F$, and let $\eta \in [0,\dim_\mathrm{A} F - \overline{\dim}^\Phi F)$. %
Define 
\begin{equation}\label{e:definelambdaalpha}
\gamma \coloneqq \frac{\overline{\dim}^\Phi F - \dim_\mathrm{L} F}{\overline{\dim}^\Phi F + \eta - \dim_\mathrm{L} F}; \qquad \alpha \coloneqq \frac{\dim_\mathrm{A} F-\overline{\dim}^\Phi F}{\dim_\mathrm{A} F-\overline{\dim}^\Phi F - \eta}. 
\end{equation}
If 
\begin{equation}\label{continuityassumption}
 \Phi_1\left(\delta\right) \leq (\Phi(\delta^{1/\alpha}))^{\gamma}
 \end{equation}
 for all sufficiently small $\delta > 0$, then $\overline{\dim}^{\Phi_1} F \leq \overline{\dim}^\Phi F + \eta$. 
The same holds with $\overline{\dim}$ replaced by $\underline{\dim}$ throughout. 
\end{thm}

By a similar argument, if we only assume that $\Phi_1\left(\delta\right) \leq (\Phi(\delta^{1/\alpha}))^{\gamma}$ (with $\gamma$ and $\alpha$ as in~\eqref{e:definelambdaalpha}) holds only for a sequence of $\delta \to 0^+$, then we can only conclude $\underline{\dim}^{\Phi_1} F \leq \overline{\dim}^\Phi F + \eta$. 

\begin{proof}
Without loss of generality assume $\eta > 0$, so $\gamma < 1 < \alpha$. The idea of the proof is to convert a cover for the interval $[\Phi(\delta),\delta]$ into a cover for $[\Phi_1(\delta^\alpha),\delta^\alpha]$. %
We do this by using the Assouad dimension to replace sets which are too large with sets of size $\delta^\alpha$ (corresponding to indices $I_1$). We use the lower dimension to replace sets which are too small with sets of size $(\Phi(\delta))^{\gamma}$ (corresponding to indices $I_3$). 
We have chosen the parameters $\gamma$ and $\alpha$ so that the `cost' of each of these actions in terms of how much the dimension can increase is the same, namely $\eta$. This is similar to the strategy for the proof of the bound~\cite[Theorem~2.6]{Banaji2021moran} for the intermediate dimensions in $\mathbb{R}^n$. 

Without loss of generality we assume that $F$ is closed. 
Now for $s \in (\overline{\dim}^\Phi F,\dim_\mathrm{A} F - \eta)$ let $s' \in (\overline{\dim}^\Phi F,s)$, $a > \dim_\mathrm{A} F$ and $\lambda < \dim_\mathrm{L} F$ satisfy 
\begin{equation}\label{ctyparameters}
\gamma (s + \eta - \lambda) - (s'-\lambda) > 0  \qquad \mbox{and} \qquad  a - s' - \alpha (a-s-\eta) > 0.
\end{equation}
Let $c \in (0,1/2)$ be such that $X$ is $c$-uniformly perfect. 
Fix $C\in (0,\infty)$ such that $N_r(B(x,R)\cap F) \leq C(R/r)^a$ for all $x \in F$ and $0<r<R$. 
Since $F$ is assumed to be complete, by~\cite[Theorem~3.2]{Kaenmaki2017} (which is very similar to the main result of~\cite{Bylund2000}), there exists a doubling Borel probability measure $\mu$ with $\mbox{supp}(\mu) = F$ and $\dim_{\mathrm L} \mu \in (\lambda,\dim_{\mathrm L} F]$. %
In particular, there exists $A \in (0,1)$ such that if $0 < r < R \leq |F|$ and $x \in X$ then 
\[ \frac{\mu(B(x,R))}{\mu(B(x,r))} \geq A \left( \frac{R}{r}\right)^{\lambda}.\]
Fix $M>1$ such that $\mu$ is $M$-doubling. 

Let $\epsilon > 0$. Choose $\Delta >0$ such that for all $\delta \in (0,\Delta)$ there exists a cover $\{U_i\}_{i \in I}$ of $F$ such that $\Phi(\delta) \leq |U_i| \leq \delta$ for all $i$, and 
\[ \sum_i |U_i|^{s'} \leq  (c^{-(s+\eta)} M^2 A^{-1} 10^{s+\eta} + 3^{s+\eta} + 2^a C)^{-1} \epsilon. \]
We may reduce $\Delta$ to assume that~\eqref{continuityassumption} and $\delta/\Phi_1(\delta) \geq 5/c$ hold for all $\delta \in (0,\Delta)$, and $\Delta < 1$, $\Delta<|X|$. 
Write $I$ as a disjoint union $I = I_1 \cup I_2 \cup I_3$ where 
\begin{align*} 
I_1 &\coloneqq \{ \, i \in I : \Phi(\delta) \leq |U_i| < \Phi_1(\delta^\alpha) \, \} \\
I_2 &\coloneqq \{ \, i \in I : \Phi_1(\delta^\alpha) \leq |U_i| \leq \delta^\alpha/2 \, \} \\
I_3 &\coloneqq \{ \, i \in I : \delta^\alpha/2 < |U_i| \leq \delta \, \},
\end{align*}
noting that some of these sets may be empty. %
Let $z_1,\dotsc,z_K$ be a maximal $4\Phi_1(\delta^\alpha)$-separated subset of 
\[ F \setminus \left( \bigcup_{i \in I_2 \cup I_3} \mathcal{S}_{\Phi_1(\delta^\alpha)}(U_i)\right),\]
where $\mathcal{S}_r(U) \coloneqq \cup_{x \in U} B(x,r)$ is the $r$-neighbourhood of $U$. 

For each $k \in I_3$ pick ${x_{k,1},\dotsc,x_{k,\lfloor C(2|U_k|/\delta^\alpha)^a \rfloor} \in F}$ such that 
\[ \mathcal{S}_{\Phi_1(\delta^\alpha)}(U_k) \cap F \subseteq \bigcup_{l=1}^{\lfloor 2^a C|U_k|^a\delta^{-a\alpha} \rfloor} B(x_{k,l},\delta^\alpha/2).\]
Define 
\begin{align*}
\mathcal{U}_1 &\coloneqq \{ \, B(z_m,5\Phi_1(\delta^\alpha)/c) : 1 \leq m \leq K \, \}, \\
\mathcal{U}_2 &\coloneqq \{ \, \mathcal{S}_{\Phi_1(\delta^\alpha)} (U_j) : j \in I_2 \, \}, \\
\mathcal{U}_3 &\coloneqq \bigcup_{k \in I_3} \{ \,  B(x_{k,l},\delta^\alpha/2) : 1 \leq l \leq \lfloor 2^a C|U_l|^a\delta^{-a\alpha} \rfloor \, \}. 
\end{align*}
Then $\mathcal{U}_1 \cup \mathcal{U}_2 \cup \mathcal{U}_3$ is a cover of $F$, and for sufficiently small $\delta$ the diameter of each covering set lies in the interval $[\Phi_1(\delta^\alpha),\delta^\alpha]$. 

We bound the $(s+\eta)$-powers of the diameters of each part of the cover separately. First consider $\mathcal{U}_1$. 
For $m \in \{1,\dotsc,K\}$ let $J_m \coloneqq \{\, i \in I_1 : U_i \cap B(z_m,\Phi_1(\delta^\alpha)) \neq \varnothing \, \}$. 
If $i \in J_m$, let $u_{i,m} \in U_i \cap B(z_m,\Phi_1(\delta^\alpha))$. Then 
\begin{align*}
 \mu(U_i) \leq \mu(B(u_{i,m},2|U_i|)) &\leq A^{-1} \mu(B(u_{i,m},2\Phi_1(\delta^\alpha))) \left(\frac{\Phi_1(\delta^\alpha)}{|U_i|}\right)^{-\lambda} \\*
&\leq M^2 A^{-1} \mu(B(z_m,\Phi_1(\delta^\alpha))) \left(\frac{\Phi_1(\delta^\alpha)}{|U_i|}\right)^{-\lambda}. 
\end{align*} 
Therefore 
\[ \mu(B(z_m,\Phi_1(\delta^\alpha))) \leq \sum_{i \in J_m} \mu(U_i) \leq M^2 A^{-1} \mu(B(z_m,\Phi_1(\delta^\alpha))) \cdot (\Phi_1(\delta^\alpha))^{-\lambda} \cdot \sum_{i \in J_m} |U_i|^{\lambda}. \]
Since $\mbox{supp}(\mu) = F$, we can cancel through by the positive number $\mu(B(z_m,\Phi_1(\delta^\alpha)))$. 
Note also that if $i \in I_1$ then there is at most one $m$ for which $U_i \cap B(z_m,\Phi_1(\delta^\alpha)) \neq \varnothing$. Therefore 
\begin{align*}
\sum_{U \in \mathcal{U}_1} |U|^{s+\eta} &\leq K (10c^{-1} \Phi_1(\delta^\alpha))^{s+\eta} \\
&\leq c^{-(s+\eta)} M^2 A^{-1} 10^{s + \eta} (\Phi_1(\delta^\alpha))^{s + \eta - \lambda} \sum_{i \in I} |U_i|^{\lambda} \\
&\leq c^{-(s+\eta)} M^2 A^{-1} 10^{s + \eta} (\Phi_1(\delta^\alpha))^{s + \eta - \lambda} (\Phi(\delta))^{-(s' - \lambda)} \sum_{i \in I} |U_i|^{s'} \\
&\leq c^{-(s+\eta)} M^2 A^{-1} 10^{s + \eta} (\Phi(\delta))^{\gamma(s + \eta - \lambda) - (s' - \lambda)} \sum_{i \in I} |U_i|^{s'} \\
&< c^{-(s+\eta)} M^2 A^{-1} 10^{s + \eta} \sum_{i \in I} |U_i|^{s'}, 
\end{align*}
where we used~\eqref{ctyparameters} in the last step. 

For $\mathcal{U}_2$, 
\begin{equation*} 
\sum_{U \in \mathcal{U}_2} |U|^{s + \eta} \leq \sum_{j \in I_2} (3 |U_j|)^{s + \eta} \leq 3^{s+\eta} \sum_{j \in I} |U_j|^{s'}. 
\end{equation*}
Finally, consider $\mathcal{U}_3$. Since $|U_k| \leq \delta$ for $k \in I_3$, 
\begin{align*}
\sum_{k \in I_3} \sum_{l = 1}^{\lfloor 2^a C|U_k|^a\delta^{-a\alpha} \rfloor} |B(x_{k,l},\delta^\alpha/2)|^{s + \eta} &\leq \sum_{k \in I_3} 2^a C|U_k|^a \delta^{-a\alpha} \delta^{\alpha(s + \eta)} \\
&\leq 2^a C \delta^{-a\alpha + \alpha(s + \eta) + a-s'} \sum_{k \in I_3} |U_k|^{s'} \\
&\leq 2^a C \sum_{k \in I} |U_k|^{s'}. 
\end{align*}
Bringing the above bounds together, for all $\delta \in (0, \Delta)$, 
\[ \sum_{U \in \mathcal{U}_1 \cup \mathcal{U}_2 \cup \mathcal{U}_3} |U|^{s + \eta} \leq (c^{-(s+\eta)} M^2 A^{-1} 10^{s+\eta} + 3^{s+\eta} + 2^a C) \sum_{i \in I} |U_i|^{s'} \leq \epsilon. \]
It follows that $\overline{\dim}^{\Phi_1} F \leq s + \eta$, as required. 
The proof for when $\overline{\dim}$ is replaced by $\underline{\dim}$ is similar. 
\end{proof} 

The following is a similar result for the case when the $\Phi$-intermediate dimension of $F$ is $0$. 

\begin{prop}\label{zerocontinuityprop}
Let $\Phi,\Phi_1$ be admissible functions, assume $0 < \dim_\mathrm{A} F < \infty$, let $\eta \in (0,\dim_\mathrm{A} F)$, and let $b>0$. 
If for all sufficiently small $\delta$, 
\begin{equation}\label{zerocontinuityassumption} 
\Phi_1(\delta) \leq \left(\Phi(\delta^{1/\alpha})\right)^b 
\qquad \mbox{where} \qquad \alpha = \alpha(\eta) \coloneqq \frac{\dim_\mathrm{A} F}{\dim_\mathrm{A} F - \eta}
\end{equation}
holds, then if $\underline{\dim}^\Phi F = 0$ then $\underline{\dim}^{\Phi_1} F \leq \eta$, and if $\overline{\dim}^\Phi F = 0$ then $\overline{\dim}^{\Phi_1} F \leq \eta$. 
If we assume only that~\eqref{zerocontinuityassumption} holds for a subsequence of $\delta \to 0^+$, then if $\overline{\dim}^\Phi F = 0$ then $\underline{\dim}^{\Phi_1} F \leq \eta$. 
\end{prop}

\begin{proof} 
This is a straightforward modification of the proof of Theorem~\ref{maincty}. A cover for $[\Phi(\delta),\delta]$ is converted into a cover for $[\Phi_1(\delta^\alpha),\delta^\alpha]$ by breaking up the largest sets using the Assouad dimension of $F$, and fattening the smallest sets. 
The details are left to the reader. 
\end{proof} 
In particular, if $\Phi_1(\delta) \leq (\Phi(\delta))^b$ holds for some $b>0$ and all sufficiently small $\delta$, then $\overline{\dim}^{\Phi} F = 0$ implies $\overline{\dim}^{\Phi_1} F = 0$. 
The following Corollary of Theorem~\ref{maincty} and Proposition~\ref{zerocontinuityprop} says that if the underlying metric space is doubling, then if $\Phi$ and $\Phi_1$ are `close' in a way that depends only on $X$, then the difference between the $\Phi$- and $\Phi_1$-intermediate dimensions of subsets will be small, independently of the particular subset. %
\begin{cor}\label{indepctycor}
Let $X$ be a doubling metric space and suppose $F \subseteq X$ is bounded. If 
\begin{equation}\label{indepctyassumption}
\Phi_1\left(\delta^{\frac{\dim_\mathrm{A} X}{\dim_\mathrm{A} X - \eta}}\right) \leq (\Phi(\delta))^{\frac{\dim_\mathrm{A} X}{\dim_\mathrm{A} X + \eta}}
\end{equation}
holds for all sufficiently small $\delta$, then if $\overline{\dim}^\Phi F < \dim_\mathrm{A} F$ and $\eta \in [0,\dim_\mathrm{A} F - \overline{\dim}^\Phi F)$ then $\overline{\dim}^{\Phi_1} F \leq \overline{\dim}^\Phi F + \eta$, and the same holds with $\overline{\dim}$ replaced by $\underline{\dim}$ throughout. 
If we only assume that~\eqref{indepctyassumption} holds for a subsequence of $\delta \to 0^+$, and if $\overline{\dim}^\Phi F < \dim_\mathrm{A} F$ and $\eta \in [0,\dim_\mathrm{A} F - \overline{\dim}^\Phi F)$, then $\underline{\dim}^{\Phi_1} F \leq \overline{\dim}^\Phi F + \eta$. 
\end{cor} 

\begin{proof} 
Using notation from~\eqref{e:definelambdaalpha}, by Proposition~\ref{basicbounds}, 
\[ \gamma \leq \frac{\dim_\mathrm{A} X}{\dim_\mathrm{A} X + \eta}   \leq 1 \leq \frac{\dim_\mathrm{A} X}{\dim_\mathrm{A} X - \eta} \leq \alpha, \]
so the result follows from Theorem~\ref{maincty} in the cases $\overline{\dim}^\Phi F > 0$ and $\underline{\dim}^\Phi F > 0$, and from Proposition~\ref{zerocontinuityprop} in the cases $\overline{\dim}^\Phi F = 0$ and $\underline{\dim}^\Phi F = 0$. 
\end{proof} 

We write $\Phi_1 \preceq \Phi_2$ if $\overline{\dim}^{\Phi_1} F \leq \overline{\dim}^{\Phi_2} F$ and $\underline{\dim}^{\Phi_1} F \leq \underline{\dim}^{\Phi_2} F$ for all subsets $F$ with $\dim_{\mathrm{A}} F < \infty$ of every underlying space $X$. If $\Phi_1 \preceq \Phi_2$ and $\Phi_2 \preceq \Phi_1$, write $\Phi_1 \equiv \Phi_2$. 
Corollary~\ref{hardcomparisoncor} gives a condition for the dimensions to coincide for all sets. 

\begin{cor}\label{hardcomparisoncor}
Let $\Phi,\Phi_1$ be admissible functions. 
\begin{enumerate}[label=(\roman*)]
\item If for all $\alpha \in (1,\infty)$ there exists $\Delta > 0$ such that for all $\delta \in (0,\Delta)$ we have %
\begin{equation}\label{hardcomparisoneqn}
 \Phi_1(\delta) \leq (\Phi(\delta^{1/\alpha}))^{1/\alpha}
 \end{equation} 
 (noting that this will be the case if, for example, there exists $C \in (0,\infty)$ such that ${\limsup_{\delta \to 0^+} \frac{\Phi_1(C \delta)}{\Phi(\delta)} < \infty}$), then $\Phi_1 \preceq \Phi$. 
 If we only assume that for all $\alpha \in (1,\infty)$ and $\delta_0 > 0$ there exists $\delta \in (0,\delta_0)$ such that~\eqref{hardcomparisoneqn} holds, then we can only conclude that $\underline{\dim}^{\Phi_1} F \leq \overline{\dim}^\Phi F$ for every subset $F$ with finite Assouad dimension. 
 
 \item If for all $\alpha \in (1,\infty)$ there exists $\Delta > 0$ such that for all $\delta \in (0,\Delta)$,  
 \begin{equation}\label{hardequality}
  (\Phi(\delta^\alpha))^\alpha \leq \Phi_1(\delta) \leq (\Phi(\delta^{1/\alpha}))^{1/\alpha}
  \end{equation}
 holds, then $\Phi \equiv \Phi_1$. 
 \end{enumerate}
 \end{cor}
 
 \begin{proof} 
In the cases $\overline{\dim}^\Phi F = 0$ and $\overline{\dim}^\Phi F = \dim_\mathrm{A} F$, (i) follows from Propositions~\ref{zerocontinuityprop} and~\ref{basicbounds}. If $0 < \overline{\dim}^\Phi F < \dim_\mathrm{A} F$ then for all $\eta \in [0,\dim_\mathrm{A} F - \overline{\dim}^\Phi F)$, by the case of~\eqref{hardcomparisoneqn} with 
 \[ \alpha \coloneqq \min\left\{\frac{\dim_\mathrm{A} F-\overline{\dim}^\Phi F}{\dim_\mathrm{A} F-\overline{\dim}^\Phi F - \eta},\frac{\overline{\dim}^\Phi F + \eta}{\overline{\dim}^\Phi F}\right\}, \]%
 it follows that $\overline{\dim}^{\Phi_1} F \leq \overline{\dim}^\Phi F + \eta$ by Theorem~\ref{maincty}. Since $\eta$ was arbitrary, $\overline{\dim}^{\Phi_1} F \leq \overline{\dim}^\Phi F$. 
 Similarly, in all cases $\underline{\dim}^{\Phi_1} F \leq \underline{\dim}^\Phi F$, so $\Phi_1 \preceq \Phi$. 
 The case when we only assume~\eqref{hardequality} along a subsequence is proved similarly, and~(ii) follows from~(i). 
 \end{proof}

We now use Corollary~\ref{hardcomparisoncor} to explore the conditions that can be imposed on the function $\Phi$, and show that nothing is really lost by only considering functions which are strictly increasing, invertible and continuous. %

\begin{prop}
For every admissible function $\Phi$ there exists an admissible function $\Phi_1 \colon (0,1) \to (0,1)$ that is a strictly increasing, $C^\infty$ diffeomorphism, such that $\Phi \equiv \Phi_1$. 

\end{prop}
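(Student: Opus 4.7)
The plan is to apply Corollary~\ref{hardcomparisoncor}~(ii) via the parenthetical sufficient condition: it suffices to construct an admissible $C^\infty$ diffeomorphism $\Phi_1 \colon (0,1) \to (0,1)$ comparable to $\Phi$ in the sense that there exist $C_1,C_2 \in (0,\infty)$ with $\limsup_{\delta \to 0^+} \Phi_1(C_1\delta)/\Phi(\delta) < \infty$ and $\limsup_{\delta \to 0^+} \Phi(C_2\delta)/\Phi_1(\delta) < \infty$. Since this comparability is a statement about $\Phi$ up to bounded multiplicative shifts in both $\delta$ and its value, the construction is cleanest in logarithmic coordinates.

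First I would observe that admissibility forces $\Phi$ to be non-decreasing on $(0,Y)$: a positive monotonic function bounded by $\delta$ that decreases on $(0,Y)$ could not tend to $0$ as $\delta\to 0^+$. Set $\psi(t) = -\log \Phi(e^{-t})$ for $t > -\log Y$, so that $\psi$ is non-decreasing with $\psi(t) \geq t$ and $\psi(t) - t \to \infty$. Fix a $C^\infty$ non-negative bump $\rho$ on $\mathbb{R}$ with $\int \rho = 1$ and $\mathrm{supp}\,\rho \subseteq [1,3]$, and define the mollification $\psi_1(t) = \int \psi(t+s)\rho(s)\,ds$. Then $\psi_1$ is $C^\infty$ (differentiate under the integral), non-decreasing (as $\psi$ is non-decreasing and $\rho \geq 0$), and the support condition gives $\psi(t+1) \leq \psi_1(t) \leq \psi(t+3)$. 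To secure strict monotonicity without breaking comparability, set $\psi_2(t) = \psi_1(t) + (1 - e^{-t})$, which is $C^\infty$ and strictly increasing since $\psi_2'(t) \geq e^{-t} > 0$, while $\psi_1(t) \leq \psi_2(t) \leq \psi_1(t) + 1$.

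Next I would define $\Phi_1(\delta) = e^{-\psi_2(-\log \delta)}$ on a small interval $(0,\delta_*)$ and extend $\Phi_1$ smoothly to a $C^\infty$ strictly increasing diffeomorphism of $(0,1)$ onto $(0,1)$ on $[\delta_*,1)$; such an extension is routine (interpolate smoothly up to $1$) and the behaviour of $\Phi_1$ away from $0$ is irrelevant both to admissibility and to the equivalence. Admissibility follows from $\Phi_1(\delta)/\delta = e^{t - \psi_2(t)} \leq e^{t - \psi(t+1)} = e^{-[\psi(t+1)-(t+1)] - 1} \to 0$ as $t \to \infty$. The equivalence reduces to two short bounds: first, $\Phi_1(\delta)/\Phi(\delta) = e^{\psi(t) - \psi_2(t)} \leq e^{\psi(t) - \psi(t+1)} \leq 1$, so $C_1 = 1$ works; second, $\Phi(e^{-3}\delta)/\Phi_1(\delta) = e^{\psi_2(t) - \psi(t+3)} \leq e^{\psi_1(t) + 1 - \psi(t+3)} \leq e$, so $C_2 = e^{-3}$ works. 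Invoking Corollary~\ref{hardcomparisoncor}~(ii) then gives $\Phi \equiv \Phi_1$.

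The main obstacle, conceptually if not technically, is reconciling $C^\infty$ smoothness with strict monotonicity when $\Phi$ may be constant on arbitrarily long intervals (in log-scale): naive mollification yields only a non-decreasing $\psi_1$, and a linear correction $\epsilon t$ would blow up comparability unboundedly. The key trick is to add a bounded strictly increasing term such as $1 - e^{-t}$, which gives a positive derivative everywhere while perturbing $\psi_1$ by at most an additive constant, so that the comparability hypothesis of Corollary~\ref{hardcomparisoncor}~(ii) is preserved.
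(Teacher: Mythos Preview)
Your proof is correct and takes a genuinely different route from the paper. The paper works directly in the original coordinates: it samples $\Phi$ at dyadic points $2^{-n}$, builds a strictly increasing piecewise-linear $\Phi_2$ by hand (with an ad hoc case split when $\Phi$ plateaus), and then locally smooths the countably many corners, checking that each modification stays within a bounded multiplicative factor of $\Phi$. You instead pass to logarithmic coordinates, where admissibility becomes the natural condition $\psi(t)\geq t$ with $\psi(t)-t\to\infty$, and apply a one-sided mollification; the sandwich $\psi(t+1)\leq\psi_1(t)\leq\psi(t+3)$ then translates back to the multiplicative comparability needed for Corollary~\ref{hardcomparisoncor}~(ii) without any case analysis, and smoothness is automatic rather than requiring a separate corner-smoothing step. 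Your additive correction $1-e^{-t}$ is a clean way to enforce strict monotonicity while keeping the perturbation bounded, which is the analogue of the paper's ``reduce by a factor of $2$ when $\Phi$ is constant'' trick but handled uniformly. The paper's approach is more elementary (no analysis beyond piecewise-linear functions and local smoothing), while yours is more systematic and makes the role of multiplicative comparability transparent; both ultimately appeal to the same corollary.
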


\begin{proof}

Fix $N \in \mathbb{N}$ such that $\Phi$ is positive and increasing on $(0,2^{-N}]$ with $\Phi(2^{-N}) < 1$. We construct a strictly increasing function $\Phi_2 \colon (0,1] \to (0,1]$ by defining $\Phi_2$ to be linear on $[2^{-N},1]$ with $\Phi_2(2^{-N}) = \Phi(2^{-N})$ and $\Phi_2(1)=1$ and defining $\Phi_2$ inductively on $(0,2^{-N})$ as follows. 
Suppose we have defined $\Phi_2$ on $[2^{-n},1]$ for some $n \geq N$. If $\Phi(2^{-n-1}) < \Phi_2(2^{-n})$ then define $\Phi_2(2^{-n-1}) = \Phi(2^{-n-1})$ and $\Phi_2$ linear on $[2^{-n-1},2^{-n}]$. If, on the other hand, $\Phi(2^{-n-1}) = \Phi_2(2^{-n})$, then let $m > n$ be the smallest integer such that $\Phi(2^{-m}) < \Phi(2^{-n})$, define $\Phi_2(2^{-m}) \coloneqq \max\{\Phi_2(2^{-n})/2,\Phi(2^{-m})\}$, and define $\Phi_2$ to be linear on $[2^{-m},2^{-n}]$. Then by construction $\Phi_2$ is strictly increasing on $(0,1]$ with $\Phi_2(\delta/4) \leq \Phi(\delta)$ and $2\Phi_2(2\delta) \geq \Phi(\delta)$ for all $\delta \in (0,2^{-N-1})$. 
Each of the countably many points of non-differentiability of $\Phi_2$ can be locally made smooth to give an admissible function $\Phi_1 \colon (0,1) \to (0,1)$ that is $C^\infty$ on $(0,1)$, still strictly increasing, and such that $\Phi_2(\delta)/2 \leq \Phi_1(\delta) \leq 2\Phi_2(\delta)$ for all $\delta \in (0,2^{-N})$. Then 
\[ \Phi_1(\delta)/\delta \leq 2\Phi_2(\delta)/\delta \leq 2\Phi(4\delta)/\delta = 8\Phi(4\delta)/(4\delta) \xrightarrow[\delta \to 0^+]{} 0, \]
so $\Phi_1$ is admissible. Moreover, 
\[ \Phi(\delta/2)/4 \leq \Phi_2(\delta)/2 \leq \Phi_1(\delta) \leq 2\Phi_2(\delta) \leq 2\Phi(4\delta) \]
for all $\delta \in (0,2^{-N-3})$, so $\Phi_1 \equiv \Phi$ by Corollary~\ref{hardcomparisoncor}~(ii). By the smooth inverse function theorem, $\Phi_1$ has a $C^\infty$ inverse, as required. 
\end{proof}
The following proposition shows that the assumption that $\Phi$ is monotonic and strictly positive does not really lose anything. 
\begin{prop}\label{p:nonadmissible}
Let $\Phi \colon (0,\Delta] \to [0,\infty)$ be \emph{any} function (not necessarily monotonic) such that $\Phi(\delta)/\delta \to 0$ as $\delta \to 0^+$, and define the $\Phi$-intermediate dimensions as in Definition~\ref{maindefinition}. Let $F$ be a subset. %
\begin{enumerate}[label=(\roman*)]
\item 
If $\Phi_1$ is defined by $\Phi_1(\delta) \coloneqq \sup\{\,\Phi(\delta') : \delta' \in [0,\delta] \, \}$ then $\overline{\dim}^\Phi F = \overline{\dim}^{\Phi_1} F$. 
\item \begin{enumerate}[label=(\arabic*)]
\item If there is a sequence of $\delta \to 0^+$ for which $\Phi(\delta) = 0$ then $\underline{\dim}^\Phi F = \dim_\mathrm{H} F$. 

\item Suppose $\Phi(\delta)>0$ for all $\delta \in (0,\Delta)$ but for all $\delta_2 \in (0,\Delta)$ there exists $\delta_3 \in (0,\delta_2)$ such that $\inf\{\,\Phi(\delta) : \delta \in [\delta_3,\delta_2] \, \} = 0$. Then if $F$ is compact then $\underline{\dim}^\Phi F = \dim_\mathrm{H} F$. In particular, if $F$ is any non-empty, bounded subset of $X=\mathbb{R}^n$ then $\underline{\dim}^\Phi F = \dim_\mathrm{H} \overline{F}$. %

\item If $\Phi_2 \colon (0,\Delta) \to \mathbb{R}$ defined by $\Phi_2(\delta) \coloneqq \inf\{\,\Phi(\delta') : \delta' \in [\delta,\Delta] \, \}$ is positive for all $\delta \in (0,\Delta)$, then $\underline{\dim}^\Phi F = \underline{\dim}^{\Phi_2} F$. 
\end{enumerate}
\end{enumerate}
\end{prop}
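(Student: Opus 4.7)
The four parts share a common pattern: one inequality between the dimensions comes directly from comparing the admissible covers for the two definitions (e.g.\ from $\Phi_1 \geq \Phi$ in (i), $\Phi_2 \leq \Phi$ in (ii)(3), or from \eqref{hausdorffdef} versus Definition~\ref{maindefinition} for $\dim_\textup{H} F \leq \underline{\dim}^\Phi F$ in (ii)(1)--(2)), while the other direction requires converting a cover from one scheme into a cover for the other. The conversion uses the uniform perfectness trick already deployed in the proof of Proposition~\ref{basicbounds}: whenever a covering set $U_i$ has diameter below a required lower bound $R > 0$, pick $x_i \in U_i$ and use $c$-uniform perfectness to adjoin a point $y_i \in X$ with $R \leq d(x_i, y_i) < R/c$; this enlarges $|U_i|$ into the admissible window at the cost of only a bounded multiplicative factor in $\sum |U_i|^s$.

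For (i), given $s > \overline{\dim}^\Phi F$ and small $\delta$, pick $\delta^* \in (0, \delta]$ with $\Phi(\delta^*) > (1-\eta)\Phi_1(\delta)$ (possible by definition of the supremum) and apply the $\Phi$-cover property at scale $\delta^*$; then enlarge any $U_i$ with $|U_i| < \Phi_1(\delta)$ to lie in $[\Phi_1(\delta), \delta]$ via the construction above. The upper bound $|U_i'| \leq \delta$ holds for small $\delta$ because $\Phi_1(\delta)/\delta \leq \sup_{\delta' \leq \delta}\Phi(\delta')/\delta' \to 0$, and the resulting sum is bounded by $((1 + 1/c)/(1-\eta))^s$ times the original. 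Part (ii)(3) runs identically for the lower dimension: given $s > \underline{\dim}^{\Phi_2} F$ and target parameters $\delta_0, \epsilon^*$, apply the $\Phi_2$-cover property to obtain some $\delta < \delta_0$ and cover $\{U_i\}$, pick $\delta^* \in [\delta, \delta_0)$ with $\Phi(\delta^*) \leq (1+\eta)\Phi_2(\delta)$, and enlarge to obtain a $\Phi$-cover at scale $\delta^*$ with sum at most $\epsilon^*$.

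Parts (ii)(1) and (ii)(2) exploit the fact that at scales where $\Phi$ vanishes, or can be made smaller than any prescribed positive number, the lower-diameter constraint on the cover effectively disappears, reducing $\underline{\dim}^\Phi$ to $\dim_\textup{H}$. Part (ii)(1) is immediate from the hypothesis that $\Phi$ vanishes at arbitrarily small scales. For (ii)(2), given $\delta_2 \in (0, \Delta_1]$ invoke the hypothesis to obtain the corresponding $\delta_3 \in (0, \delta_2)$; compactness of $F$ then lets us pass any cover witnessing $s > \dim_\textup{H} F$ with diameters $\leq \delta_3$ to a finite subcover $\{V_1, \ldots, V_N\}$ with strictly positive minimum diameter $m$, after which we select $\delta \in [\delta_3, \delta_2)$ with $\Phi(\delta) \leq m$. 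Such a $\delta$ exists because $\inf_{[\delta_3, \delta_2]}\Phi = 0$ while $\Phi(\delta_2) > 0$, so arbitrarily small values of $\Phi$ on this interval must be attained at points strictly less than $\delta_2$. The statement for bounded $F \subseteq \mathbb{R}^n$ follows by applying the compact case to $\overline{F}$ via Heine-Borel and the closure-stability in Proposition~\ref{unprovedprop}.

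The principal subtlety appears in (ii)(3): ensuring that the scale $\delta^*$ at which $\Phi(\delta^*)$ nearly realizes $\Phi_2(\delta) = \inf_{\delta'' \in [\delta, \Delta_2]}\Phi(\delta'')$ actually lies in $[\delta, \delta_0)$ rather than in $[\delta_0, \Delta_2]$. The positivity hypothesis $\Phi_2 > 0$ ensures $\Phi_2(\delta_0) > 0$, while $\Phi(\delta')/\delta' \to 0$ forces $\Phi(\delta') \to 0$ as $\delta' \to 0^+$, so $\inf_{\delta'' \in (0, \delta_0)}\Phi(\delta'') = 0 < \Phi_2(\delta_0)$. Consequently, for all sufficiently small $\delta$ the infimum defining $\Phi_2(\delta)$ is already realized within $[\delta, \delta_0)$, which guarantees that $\delta^*$ may be chosen in the required range.
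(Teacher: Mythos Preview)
Your proposal is correct and follows essentially the same approach as the paper: the easy inequality in each part comes from inclusion of admissible covers, and the reverse inequality is obtained by taking a cover at a nearby scale $\delta^*$ where $\Phi$ (or $\Phi_2$) is close to its extremal value and then enlarging the small sets via uniform perfectness. The only notable variation is in (ii)(3): the paper locates the near-infimising scale $\delta_2$ by choosing the auxiliary threshold $\delta_0 < \Phi_2(\min\{\Delta_2,\delta_1\})/2$ and using the monotonicity of $\Phi_2$, whereas you argue that $\Phi(\delta')\to 0$ together with $\Phi_2(\delta_0)>0$ forces the infimum over $[\delta,\Delta_2]$ to be attained already on $[\delta,\delta_0)$ for small $\delta$ --- these are equivalent mechanisms for the same conclusion.
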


\begin{proof}
We may assume that $\Delta < \min\{1,|X|\}$ and that $\Phi(\delta) \leq (1+2/c)^{-1} \delta$ for all $\delta \in (0,\Delta)$. In the proofs of the different parts of the proposition, the same symbols may take different values. 

(i) For all $\delta \in (0,\Delta)$, 
\[ \Phi_1(\delta)/\delta = \sup\{ \, \Phi(\delta')/\delta : \delta' \in (0,\delta] \, \} \leq \sup\{ \, \Phi(\delta')/\delta' : \delta' \in (0,\delta] \, \} \xrightarrow[\delta \to 0^+]{} 0, \]
and $\Phi_1(\delta)$ is monotonic, so $\Phi_1$ is admissible. 
Also, $\Phi(\delta)\leq \Phi_1(\delta)$, so $\overline{\dim}^\Phi F \leq \overline{\dim}^{\Phi_1} F$. 
It remains to prove the reverse inequality. %
Let $s>\overline{\dim}^\Phi F$ and $\epsilon > 0$. Then there exists $\delta_0>0$ such that for all $\delta \in (0,\min\{\delta_0,\Delta\})$ there exists a cover $\{U_i\}$ of $F$ such that $\Phi(\delta) \leq |U_i| \leq \delta$ for all $i$, and 
\begin{equation}\label{firstphicompareeps}
\sum_i |U_i|^s \leq 2^{-s}(1+1/c)^{-s} \epsilon.
\end{equation}
 Then if $\delta' \in (0,\delta_0)$ then there exists $\delta \in (0,\delta']$ such that $\Phi(\delta)\geq \Phi_1(\delta')/2$. Let $\{U_i\}$ be the cover corresponding to $\delta$ as above. For each $i$, if $|U_i| \geq \Phi_1(\delta')$ then leave $|U_i|$ in the cover unchanged, noting that $\Phi_1(\delta') \leq |U_i| \leq \delta \leq \delta'$. If $\Phi_1(\delta') > |U_i|$, on the other hand, then fix $p_i \in U_i$, and $q_i \in X$ such that $\Phi_1(\delta') \leq d(p_i,q_i) \leq \Phi_1(\delta')/c$. Replace $U_i$ in the cover by $U_i \cup \{q_i\}$, and denote the new cover of $F$ by $\{V_i\}_i$. Then 
\[ \Phi_1(\delta') \leq d(p_i,q_i) \leq |U_i \cup \{q_i\}| < (1+1/c)\Phi_1(\delta') \leq \delta'. \] 
Also,
 \[ |U_i \cup \{q_i\}| \leq 2(1+1/c)\Phi(\delta) \leq 2(1+1/c)|U_i|.\]
Therefore 
\[ \sum_i |V_i|^s \leq \sum_i (2(1+1/c)|U_i|)^s = 2^s (1+1/c)^s \sum_i |U_i|^s \leq \epsilon \]
by~\eqref{firstphicompareeps}, so $\overline{\dim}^{\Phi_1} F \leq s$, hence $\underline{\dim}^{\Phi_1} F \leq \overline{\dim}^\Phi F$ as required. 

(ii) 
(1) Follows directly from~\eqref{hausdorffdef} and Definition~\ref{maindefinition}. 

(ii) (2) 
Assume that $F$ is compact. %
Let $s>\dim_\mathrm{H} F$, $\epsilon>0$ and $\delta_2 \in (0,1]$, so there exists $\delta_3 \in (0,\delta_2)$ such that $\inf\{\,\Phi(\delta) : \delta \in [\delta_3,\delta_2] \, \} = 0$. There exists a countable cover $\{U_i\}$ of $F$ such that $\sum_i |U_i|^{s} \leq \min\{\delta_3^{s},\epsilon\}$. In particular, $|U_i|\leq \delta_3$. Since $F$ is compact, there is a finite subcover $\{V_i\}$, so $\min_i\{|V_i|\}>0$, and each  $|V_i|\leq \delta_3$. Since $\inf\{\,\Phi(\delta) : \delta \in [\delta_3,\delta_2] \, \} = 0$, there exists $\delta_4 \in [\delta_3,\delta_2]$ such that $\Phi(\delta_4) \in (0,\min_i\{|V_i|\})$. Then $0 \leq \Phi(\delta_4) \leq \min_i\{|V_i|\}) \leq |V_i| \leq \delta_3 \leq \delta_4$ for each $i$, and $\sum_i |V_i|^{s} \leq \sum_i |U_i|^{s} \leq \epsilon$. As $\epsilon$ and $\delta_2$ were arbitrary, $\underline{\dim}^\Phi F \leq s$, so $\underline{\dim}^\Phi F = \dim_\mathrm{H} F$. 

(ii) (3) 
Clearly $\Phi_2$ is admissible and $\underline{\dim}^{\Phi_2} F \leq \underline{\dim}^{\Phi} F$, so it remains to prove the reverse inequality. Let $s > \underline{\dim}^{\Phi_2} F$ and $\epsilon > 0$. Let $\delta_1 > 0$ and let $\delta_0 \in (0,\Phi_2(\min\{\Delta,\delta_1\})/2)$. 
 Then there exists $\delta \in (0,\delta_0)$ and a cover $\{U_i\}$ of $F$ such that $\Phi_2(\delta) \leq |U_i| \leq \delta$ for all $i$, and 
 \begin{equation}\label{lastphicompareeps}
 \sum_i |U_i|^s \leq 2^{-s}(1+1/c)^{-s} \epsilon.
 \end{equation}
  By the definition of $\Phi_2$, there exists $\delta_2 \in [\delta,\Delta]$ such that $\Phi(\delta_2) < 2\Phi_2(\delta)$. But since $\Phi_2(\delta) \leq \Phi(\delta) \leq \delta_0 < \Phi_2(\min\{\Delta,\delta_1\})/2$, %
 it must be the case that $\delta_2 <\min\{\Delta,\delta_1\}$. %
 If $|U_i| \geq \Phi(\delta_2)$ then leave $U_i$ in the cover unchanged. If $|U_i| < \Phi(\delta_2)$ then fix $p_i \in U_i$ and $q_i \in X$ such that $\Phi(\delta_2) \leq d(p_i,q_i) \leq \Phi(\delta_2)/c$; replace $U_i$ in the cover with $U_i \cup \{q_i\}$ and call the new cover $\{V_i\}$. 
 Now, $\Phi(\delta_2) \leq |U_i \cup \{q_i\}| < \delta_2$. Also, $|U_i \cup \{q_i\}| \leq 2(1+1/c)\Phi_2(\delta) \leq 2(1+1/c)|U_i|$. 
 Therefore 
 \[ \sum_i |V_i|^s \leq \sum_i (2(1+1/c)|U_i|)^s = 2^s (1+1/c)^s \sum_i|U_i|^s \leq \epsilon, \]
 by~\eqref{lastphicompareeps}. It follows that $\underline{\dim}^{\Phi} F \leq s$, as required. 
\end{proof}

\subsection{The intermediate dimensions}\label{s:intcty}

In this section explore the consequences of general results proved for the $\Phi$-intermediate dimensions in Section~\ref{phiintermediatecty} for the special case of the $\theta$-intermediate dimensions. 
By Proposition~\ref{zerocontinuityprop}, the following mutual dependency between the box and intermediate dimensions holds. 

\begin{prop}\label{mutual}
If $0 < \overline{\dim}_\mathrm{B} F \leq \dim_\mathrm{A} F < \infty$ then $\overline{\dim}_\theta F > 0$ for all $\theta \in (0,1]$. The same holds with $\overline{\dim}$ replaced by $\underline{\dim}$ throughout. 
\end{prop}

\begin{proof}
Assume $0 < \dim_\mathrm{A} F < \infty$, let $\theta \in (0,1)$, and suppose that $\overline{\dim}_\theta F = 0$. Then if $\Phi(\delta) = \delta^{1/\theta}$ and $\Phi_1(\delta) = \delta/(-\log \delta)$ then $\Phi_1(\delta) \leq \delta = (\Phi(\delta))^{\theta}$, so $0 = \overline{\dim}^{\Phi_1} F = \overline{\dim}_\mathrm{B} F$ by Propositions~\ref{zerocontinuityprop} and~Proposition~\ref{whenequalsbox}. 
The proof for the lower versions of the dimensions is similar. 
\end{proof}

For subsets of Euclidean space, Proposition~\ref{mutual} also follows from~\cite[(14.2.7)]{Falconer2021-2}. 
Proposition~\ref{mutual} means that in order to check that the box dimension of a set is 0, it suffices to check the \emph{a priori} weaker condition that the $\theta$-intermediate dimension of the set is 0 at a small $\theta \in (0,1]$. 
It would be interesting to know if there are sets whose box dimension has resisted calculation by other methods but can be calculated in this way. 
Another mutual dependency result between different notions of dimension is that the upper box dimension of a set is 0 if and only if its Assouad spectrum and quasi-Assouad dimensions are 0, which follows from work in~\cite{Fraser2018-2,Garcia2021,Fraser2019-3}. 
 
 Theorem~\ref{intermediatects} is a quantitative continuity result for the intermediate dimensions which improves~\cite[Proposition~2.1]{Falconer2020} and~\cite[(14.2.2)]{Falconer2021-2}. The proof of~\cite[Proposition~2.1]{Falconer2020} involves breaking up the largest sets in the cover, while~\cite[(14.2.2)]{Falconer2021-2} is proved by `fattening' the smallest sets in the cover. The novelty in the proof of Theorem~\ref{maincty} (from which Theorem~\ref{intermediatects} follows) is to deal with the smallest and largest sets at the same time in such a way that the `cost' of each (in terms of how much the dimension can increase) is the same. %
 Banaji and Rutar recently gave a direct proof of Theorem~\ref{intermediatects} in the Euclidean setting in the proof of~\cite[Theorem~2.6]{Banaji2021moran}. 
 
 \begin{thm}\label{intermediatects}
If $0 \leq \dim_\mathrm{L} F < \dim_\mathrm{A} F < \infty$ %
and $0 < \theta \leq \phi \leq 1$ then 
 \[  \overline{\dim}_\theta F \leq \overline{\dim}_\phi F \leq \overline{\dim}_\theta F + \frac{(\overline{\dim}_\theta F - \dim_\mathrm{L} F) (\dim_\mathrm{A} F -\overline{\dim}_\theta F)}{\phi (\overline{\dim}_\theta F - \dim_\mathrm{L} F) + \theta( \dim_\mathrm{A} F -\overline{\dim}_\theta F)}(\phi-\theta). \] 
 The same holds with $\overline{\dim}$ replaced by $\underline{\dim}$ throughout. 
Furthermore, the functions $\theta' \mapsto \overline{\dim}_{\theta'} F$ and $\theta' \mapsto \underline{\dim}_{\theta'} F$ are continuous for $\theta' \in (0,1]$; indeed they are both Lipschitz on $[\theta,1]$ with Lipschitz constant $\frac{\dim_\mathrm{A} F}{4\theta}$. 
\end{thm}

Therefore by Lipschitz continuity and Rademacher's theorem, or alternatively by monotonicity and Lebesgue's theorem, the functions $\theta' \mapsto \overline{\dim}_{\theta'} F$ and $\theta' \mapsto \underline{\dim}_{\theta'} F$ are differentiable at Lebesgue-almost every $\theta' \in (0,1)$. %

\begin{proof}
We prove the version for $\overline{\dim}$; the version for $\underline{\dim}$ is similar. 
The inequality $\overline{\dim}_\theta F \leq \overline{\dim}_\phi F$ is immediate from the definitions. %
The only non-trivial case of the other inequality is when $0 < \theta < \phi \leq 1$ and $0 < \overline{\dim}_\theta F < \dim_\mathrm{A} F$. Define $\Phi(\delta) \coloneqq \delta^{1/\theta}$. If $\phi<1$, define $\Phi_1(\delta) \coloneqq \delta^{1/\phi}$, but if $\phi = 1$ then define $\Phi_1(\delta) \coloneqq \delta/(-\log \delta)$. Then $\overline{\dim}^\Phi F = \overline{\dim}_\theta F$ and $\overline{\dim}^{\Phi_1} F = \overline{\dim}_\phi F$. Define 
\[\eta \coloneqq \frac{(\overline{\dim}_\theta F - \dim_\mathrm{L} F) (\dim_\mathrm{A} F -\overline{\dim}_\theta F)}{\phi (\overline{\dim}_\theta F - \dim_\mathrm{L} F) + \theta( \dim_\mathrm{A} F -\overline{\dim}_\theta F)}(\phi-\theta). \] 
Using notation from~\eqref{e:definelambdaalpha}, a direct manipulation now shows that $\alpha/\phi = \gamma/\theta$. 
Therefore 
\[ \Phi_1(\delta^\alpha) \leq \delta^{\alpha/\phi} = \delta^{\gamma/\theta} = (\Phi(\delta))^{\gamma/\theta}.\] 
Thus $\overline{\dim}^{\Phi_1} F \leq \overline{\dim}^\Phi F + \eta$ by Theorem~\ref{maincty}, as required. 
To deduce Lipschitz continuity on $[\theta,1]$, note that if $0 < \theta \leq \theta' \leq \phi \leq 1$ then 
\begin{equation*}
\overline{\dim}_\phi F - \overline{\dim}_{\theta'} F 
\leq 
\frac{(\dim_{\mathrm{A}} F - \dim_{\mathrm{L}} F)/2)^2}{(\dim_{\mathrm{A}} F - \dim_{\mathrm{L}} F)\theta} (\phi - \theta')
\leq \frac{\dim_{\mathrm{A}} F - \dim_{\mathrm{L}} F}{4\theta}(\phi - \theta'). \qedhere
\end{equation*}
\end{proof}

Falconer noted that his continuity result~\cite[(14.2.2)]{Falconer2021-2} shows that $\frac{\overline{\dim}_\theta F}{\theta}$ and $\frac{\underline{\dim}_\theta F}{\theta}$ are monotonically decreasing in $\theta \in (0,1]$, so the graphs of $\theta \to \overline{\dim}_\theta F$ and $\theta \to \underline{\dim}_\theta F$ for $\theta \in (0,1]$ are starshaped with respect to the origin. 
 Corollary~\ref{starshaped} shows that in fact the graphs are \emph{strictly} starshaped, and every half-line from the origin in the first quadrant intersects the graphs in a single point. 

\begin{cor}\label{starshaped}
If $0 < \overline{\dim}_\mathrm{B} F \leq \dim_\mathrm{A} F < \infty$ then $(\overline{\dim}_\theta F)/\theta$ is strictly decreasing in $\theta \in (0,1]$. The same holds with $\overline{\dim}$ replaced by $\underline{\dim}$ throughout. 
\end{cor} 

\begin{proof} 

The only non-trivial case is when $\dim_{\mathrm L} F < \dim_{\mathrm A} F$. Suppose $0 < \theta < \phi \leq 1$. By Proposition~\ref{mutual}, $\overline{\dim}_\theta F > 0$, so by Theorem~\ref{intermediatects} and a direct algebraic manipulation, 
\[ \frac{\overline{\dim}_\phi F}{\phi} \leq \frac{1}{\phi}\left(     \overline{\dim}_\theta F + \frac{(\overline{\dim}_\theta F - \dim_\mathrm{L} F) (\dim_\mathrm{A} F -\overline{\dim}_\theta F)}{\phi (\overline{\dim}_\theta F - \dim_\mathrm{L} F) + \theta( \dim_\mathrm{A} F -\overline{\dim}_\theta F)}(\phi-\theta)   \right) < \frac{\overline{\dim}_\theta F}{\theta}. \qedhere \]
\end{proof}
If we know the value of $\overline{\dim}_\theta F$ or $\underline{\dim}_\theta F$ for one value of $\theta \in (0,1]$ then Theorem~\ref{intermediatects} gives an upper bound for $\overline{\dim}_\phi F$ or $\underline{\dim}_\phi F$ respectively, for all $\phi \in [\theta,1]$. 
Rearranging this bound gives 
\begin{align*}
 \overline{\dim}_\phi F &(\phi (\overline{\dim}_\theta F - \dim_\mathrm{L} F) + \theta( \dim_\mathrm{A} F -\overline{\dim}_\theta F)) \\
 &\leq \overline{\dim}_\theta F (\phi (\overline{\dim}_\theta F - \dim_\mathrm{L} F) + \theta( \dim_\mathrm{A} F -\overline{\dim}_\theta F)) \\*
 &+ (\overline{\dim}_\theta F - \dim_\mathrm{L} F) (\dim_\mathrm{A} F -\overline{\dim}_\theta F)(\phi-\theta). 
 \end{align*}
Expanding brackets, cancelling terms and rearranging, we obtain what can be thought of as a lower bound for
for $\overline{\dim}_\theta F$ in terms of $\overline{\dim}_\phi F$: 
\begin{equation}\label{ctylowerbound}
 \overline{\dim}_\theta F \geq \frac{ \theta \dim_{\mathrm A} F ( \overline{\dim}_\phi F - \dim_{\mathrm L} F ) +    \phi  \dim_{\mathrm L} F ( \dim_{\mathrm A} F -  \overline{\dim}_\phi F ) }{ \theta (\overline{\dim}_\phi F - \dim_{\mathrm L} F) + \phi ( \dim_{\mathrm A} F  - \overline{\dim}_\phi F)}. 
 \end{equation}
Of particular interest is the lower bound for the intermediate dimensions in terms of the box dimension, because the box dimension of many sets is known independently. 
The following bound is given for subsets of $\mathbb{R}^n$ in \cite[Corollary~2.8]{Banaji2021moran}. 

\begin{cor}\label{ffkgeneralbounds}
If $\dim_\mathrm{L} F < \dim_\mathrm{A} F < \infty$ then for all $\theta \in (0,1]$, 
\[ \overline{\dim}_\theta F \geq \frac{\theta \dim_{\mathrm A} F ( \overline{\dim}_{\mathrm B} F - \dim_{\mathrm L} F ) + \dim_{\mathrm L} F ( \dim_{\mathrm A} F -  \overline{\dim}_{\mathrm B} F ) }{ \theta (\overline{\dim}_{\mathrm B} F - \dim_{\mathrm L} F) +  ( \dim_{\mathrm A} F  - \overline{\dim}_{\mathrm B} F)}. \]
The same holds replacing $\overline{\dim}$ with $\underline{\dim}$ throughout. 
\end{cor}

\begin{proof}
Set $\phi = 1$ in~\eqref{ctylowerbound}. 
\end{proof}

We make several remarks about this bound. 
\begin{itemize}
\item In the bounds in Sections~\ref{phiintermediatecty} and~\ref{s:intcty}, if every instance of $\dim_{\mathrm L} F$ is replaced by $0$ and every instance of $\dim_{\mathrm A} F$ is replaced by $n$, then we obtain bounds which hold for all non-empty bounded $F \subset \mathbb{R}^n$. 

\item If $\dim_\theta F \in \{0,\dim_{\mathrm L} F, \dim_{\mathrm A} F\}$ for some $\theta \in (0,1]$ then $\dim_\theta F$ is constant on $(0,1]$. This extends results in~\cite{Falconer2020,Falconer2021-2,Banaji2021moran} to more general metric spaces. 

\item Assume $\dim_{\mathrm L} F < \dim_{\mathrm B} F < \dim_{\mathrm A} F$. Then one can differentiate the bound and show that it is real analytic, strictly increasing, strictly concave, and takes value $\dim_{\mathrm L} F$ at $\theta = 0$ and $\dim_{\mathrm B} F$ at $\theta = 1$. In this sense, it is a quantitative improvement of Proposition~\ref{mutual}. 

\item As $\dim_{\mathrm B} F$ approaches $\dim_{\mathrm A} F$ or $\dim_{\mathrm L} F$ respectively, so does the lower bound pointwise. 

\item The dimension theory of the self-affine Bedford--McMullen carpets has received considerable attention~\cite{Fraser2021-2}. Banaji and Kolossv\'ary recently proved a precise formula for the intermediate dimensions in~\cite{Banaji2021bedford}, which have previously been studied in~\cite[Section~4]{Falconer2020} and~\cite{Kolossvary2022bm}. For some carpets, in particular when the maps in the defining iterated function system are very unevenly distributed in the different columns, Corollary~\ref{ffkgeneralbounds} can give non-trivial information when $\theta$ is close to 1. 

\end{itemize}
We now show that the above bounds are sharp (in contrast to the bounds~\cite[Proposition~2.1]{Falconer2020} and~\cite[(14.2.7)]{Falconer2021-2}). Working in $\mathbb{R}$, for $p \in (0,\infty)$ let $F_p \coloneqq \{0\} \cup \{ \, n^{-p} : n \in \mathbb{N} \, \}$. It is straightforward to verify that $\dim_{\mathrm L} F_p = 0$, $\dim_\mathrm{A} F_p = 1$ and $\dim_\mathrm{B} F_p = \frac{1}{p+1}$. Falconer, Fraser and Kempton~\cite[Proposition~3.1]{Falconer2020} showed that $\dim_\theta F_p = \theta/(p+\theta)$ for all $\theta \in [0,1]$. 
Therefore if $0 < \theta \leq \phi \leq 1$ then by a direct algebraic manipulation, 
\begin{align*}
 \dim_\theta F_p &+ \frac{(\dim_\theta F - \dim_\mathrm{L} F) (\dim_\mathrm{A} F - \dim_\theta F)}{\phi (\dim_\theta F - \dim_\mathrm{L} F) + \theta( \dim_\mathrm{A} F - \dim_\theta F)}(\phi-\theta) \\
 &=  \frac{\theta}{p + \theta} + \frac{\frac{\theta}{p + \theta} \left(1-\frac{\theta}{p + \theta}\right)}{(\phi - \theta)\frac{\theta}{p + \theta} + \theta}(\phi - \theta) = \frac{\phi}{p+\phi} = \dim_\phi F_p,
\end{align*}
so the upper bound of Theorem~\ref{intermediatects} is attained. 
Similarly, this family of examples shows that the Lipschitz constant in~Theorem~\ref{intermediatects} and the lower bound Corollary~\ref{ffkgeneralbounds} cannot be improved in general. 
These bounds are also sharp for certain lattice sets which generalise the $F_p$ sets to higher dimensions. 
In~\cite[Proposition~3.8]{Banaji2021}, Banaji and Fraser used the bound in Corollary~\ref{ffkgeneralbounds} to calculate the intermediate dimensions of these lattice sets without needing to use a mass distribution argument as in the proof of~\cite[Proposition~3.1]{Falconer2020}. 
The bound can also be used to calculate the intermediate dimensions of the graph of the popcorn function~\cite{Banaji2022popcorn}. 

A certain converse to Theorem~\ref{intermediatects} was proved by Banaji and Rutar as the main result of~\cite{Banaji2021moran} using a Moran set construction. 
In particular, if $d \in \mathbb{N}$ and $0 \leq \lambda \leq \alpha \leq d$ and $h \colon [0,1] \to [0,d]$ is an \emph{arbitrary} increasing function satisfying 
\[ h(\phi) \leq h(\theta) + \frac{(h(\theta) - \lambda) (\alpha - h(\theta))}{\phi(h(\theta) - \lambda) + \theta (\alpha - h(\theta))}(\phi - \theta) \]
for all $0 < \theta \leq \phi \leq 1$, 
then there exists a compact perfect set $F \subset \mathbb{R}^d$ such that $\dim_{\mathrm A} F = \alpha$, $\dim_{\mathrm L} F = \lambda$, and $\dim_{\theta} F = h(\theta)$ for all $\theta \in [0,1]$.

A consequence of Corollary~\ref{hardcomparisoncor} is the following relationships between the $\Phi$-intermediate and intermediate dimensions. %

\begin{prop}\label{compareintermediate}
Let $\Phi$ be any admissible function, and let
\begin{equation}\label{compareintermediatedefinitions} \theta_1 \coloneqq \liminf_{\delta \to 0^+} \frac{\log \delta}{\log \Phi(\delta)}; \qquad \theta_2 \coloneqq \limsup_{\delta \to 0^+} \frac{\log\delta}{\log\Phi(\delta)},
\end{equation}
 noting that $0\leq \theta_1 \leq \theta_2 \leq 1$. 
 If $\dim_{\mathrm A} F < \infty$ then the following bounds hold: 
 \begin{itemize}
\item If $0=\theta_2=\lim_{\delta \to 0^+} \frac{\log \delta}{\log \Phi(\delta)}$ then $\underline{\dim}^\Phi F \leq \underline{\dim}_\theta F$ and $\overline{\dim}^\Phi F \leq \overline{\dim}_\theta F$ for all $\theta \in (0,1]$. 

\item If $0=\theta_1<\theta_2$ then $\underline{\dim}_{\theta_2} F \leq \overline{\dim}^\Phi F \leq \overline{\dim}_{\theta_2} F$ (so if $\dim_{\theta_2} F$ exists then $\overline{\dim}^\Phi F = \dim_{\theta_2} F$), and $\underline{\dim}^\Phi F \leq \min\{\overline{\dim}_\theta F,\underline{\dim}_{\theta_2} F\}$ for all $\theta \in (0,1]$. 

\item If $0< \theta_1\leq\theta_2$ then 
\begin{gather*}
\underline{\dim}_{\theta_1} F \leq \underline{\dim}^\Phi F \leq \min\{\overline{\dim}_{\theta_1} F,\underline{\dim}_{\theta_2} F\}, \\
\max\{\overline{\dim}_{\theta_1} F,\underline{\dim}_{\theta_2} F\} \leq \overline{\dim}^\Phi F \leq \overline{\dim}_{\theta_2} F.
\end{gather*}

\item If $0<\theta_1=\theta_2$ then $\underline{\dim}^\Phi F = \underline{\dim}_{\theta_1} F$ and $\overline{\dim}^\Phi F =\overline{\dim}_{\theta_1}$. 
\end{itemize}
\end{prop}

\begin{proof}
As an example, we prove $\overline{\dim}^\Phi F \leq \overline{\dim}_{\theta_2} F$ under the assumption that $\theta_2 > 0$; the other bounds are proved similarly. If $\theta_2 = 1$ then this follows from Proposition~\ref{basicbounds}, so assume $\theta_2 \in (0,1)$. Then letting $\eta \in (0,1-\theta_2)$, by the definition of $\theta_2$, 
\[ \limsup_{\delta \to 0^+}\frac{\Phi(\delta)}{\delta^{1/(\theta_2 + \eta)}} = 0 < \infty. \]
Corollary~\ref{hardcomparisoncor}~(i) now gives $\overline{\dim}^\Phi F \leq \overline{\dim}_{\theta_2+\eta} F$. The intermediate dimensions are continuous at $\theta_2 > 0$ by Theorem~\ref{intermediatects} so the result follows upon letting $\eta \to 0^+$. 
\end{proof}

For sets whose upper intermediate dimensions are continuous at $\theta=0$, usually we will not study the $\Phi$-intermediate dimensions, because much information about the general $\Phi$-intermediate dimensions of such sets can be obtained directly from results about their intermediate dimensions and these inequalities.

\section{H\"older and Lipschitz maps}\label{holdersection}

\subsection{H\"older distortion}

We now investigate how these dimensions behave under H{\"o}lder and Lipschitz maps. 
We say that a map $f \colon X \to Y$ is \emph{H{\"o}lder}, \emph{$\alpha$-H{\"o}lder} or \emph{($C,\alpha$)-H{\"o}lder} if 
\[ d_Y(f(x_1),f(x_2)) \leq C d_X(x_1,x_2)^\alpha \qquad \mbox{for all }x_1,x_2 \in X\] 
for constants $\alpha \in (0,1]$ and $C \in [0,\infty)$, and we call $\alpha$ the \emph{exponent}. Interestingly, the familiar upper bound $\dim f(F) \leq \alpha^{-1}\dim F$ for the image of a `reasonable' set $F$ under an $\alpha$-H{\"o}lder map $f$, which holds for the Hausdorff, box and intermediate dimensions (see Corollary~\ref{holderintermediate}), is different to the bound that is obtained for the $\Phi$-intermediate dimensions in the main result of this section,~Theorem~\ref{holder}. 
Fraser~\cite{Fraser2019-4} uses the Assouad spectrum to give bounds on the possible H{\"o}lder exponents of maps from an interval to a natural class of spirals. 
These bounds are better than bounds that have been obtained using any other notion of dimensions. 
A possible direction for future research would be to give similar applications of results in this section to obtain information about the possible H{\"o}lder exponents of maps between sets.

\begin{thm}\label{holder}
Let $\Phi$ and $\Phi_1$ be admissible functions and let $(X,d_X)$ and $(Y,d_Y)$ be uniformly perfect. 
Let $f \colon F \to Y$ be a H{\"o}lder map with exponent $\alpha \in (0,1]$ for some $F \subseteq X$, assume $\dim_\mathrm{A} f(F) < \infty$, and let $\gamma \in [1,1/\alpha]$. 
Assume that 
\begin{equation}\label{phiholdercondition}
 \Phi_1(\delta) \leq (\Phi(\delta^{1/(\alpha \gamma)}))^\alpha
 \end{equation}%
 for all sufficiently small $\delta$, and suppose $\overline{\dim}^\Phi F < \alpha \dim_\mathrm{A} f(F)$. 
  Then 
\[ \overline{\dim}^{\Phi_1} f(F) \leq \frac{\overline{\dim}^\Phi F + \alpha (\gamma - 1)\dim_\mathrm{A} f(F)}{\alpha \gamma}.\]
The same holds with $\overline{\dim}$ replaced by $\underline{\dim}$ throughout. 
\end{thm}

\begin{proof}
The idea of the proof is to consider a cover of $F$ with diameters in $[\Phi(\delta),\delta]$, consider the cover of $f(F)$ formed by the images under $f$ of this cover, and `fatten' the smallest sets in the new cover to size $\Phi_1(\delta^{\alpha \gamma})$ and break up the largest sets in the new cover to size $\delta^{\alpha \gamma}$. 
Assume that $f$ is ($C,\alpha$)-H{\"o}lder with $C \geq 1$. Let $\epsilon > 0$. Let  
\[ t > \frac{\overline{\dim}^\Phi F + \alpha (\gamma - 1)\dim_\mathrm{A} f(F)}{\alpha \gamma}. \]
Then there exist $s > \overline{\dim}^\Phi F$ and $a> \dim_\mathrm{A} f(F)$ such that $s < \alpha a$ and $t > (s + \alpha (\gamma - 1)a)/(\alpha \gamma)$. 
 Define  
\[g(\eta) \coloneqq \frac{\eta s + \alpha a (\gamma - \eta )}{\alpha \gamma}. \] 
 Since $a> \dim_\mathrm{A} f(F)$, there exists $M \in \mathbb{N}$ such that $N_r(B(y,R)\cap f(F)) \leq M(R/r)^a$ for all $y \in f(F)$ and $0<r<R$. 
Let $c \in (0,1)$ be such that $X$ and $Y$ are $c$-uniformly perfect. 
For all small enough $\delta$ we have $\Phi(\delta)/\delta < c/2$ and $\Phi_1(\delta)/\delta < c/2$, and there exists a cover $\{U_i\}$ of $F$ such that $\Phi(\delta) \leq |U_i| \leq \delta$ for all $i$, and 
\begin{equation}\label{holdersumbound}
\sum_i |U_i|^s \leq ((C + c^{-1})^{s/\alpha} + M(2C)^{a + \gamma g(1)})^{-1} \epsilon/2.
\end{equation}
Without loss of generality assume $U_i \cap F \neq \varnothing$ for all $i$. Now, $\{f(U_i)\}$ covers $f(F)$, and $|f(U_i)| \leq C|U_i|^\alpha$ for all $i$. 
There are two cases. 

\textbf{Case 1}: Suppose $i$ is such that $|f(U_i)| \leq \delta^{\alpha \gamma}/2$. Fix any $y_i \in f(U_i)$. There exists $y_i' \in Y$ such that $\Phi_1(\delta^{\alpha \gamma}) \leq d_Y(y_i,y_i') \leq \Phi_1(\delta^{\alpha \gamma})/c$, hence $d_Y(y_i,y_i') \leq (\Phi(\delta))^\alpha/c$. Let $V_i \coloneqq f(U_i) \cup \{y_i'\}$. 
By the triangle inequality, %
\begin{equation}\label{holderdiambound1} 
\Phi_1(\delta^{\alpha \gamma}) \leq d_Y(y_i,y_i') \leq |V_i| \leq |f(U_i)| + \Phi_1(\delta^{\alpha \gamma})/c \leq \delta^{\alpha \gamma}.
\end{equation}
Moreover, by the assumption~\eqref{phiholdercondition} about $\Phi_1$, 
\begin{equation}\label{holderubound1} |V_i| \leq |f(U_i)| + \Phi_1(\delta^{\alpha \gamma})/c \leq C|U_i|^\alpha + (\Phi(\delta))^\alpha/c \leq (C+ c^{-1}) |U_i|^\alpha. 
\end{equation}

\textbf{Case 2}: Now suppose that $i$ is such that $\delta^{\alpha \gamma}/2 < |f(U_i)| \leq C\delta^\alpha$. Then $(2C)^{-1/\alpha}\delta^\gamma < |U_i| \leq \delta$ so there exists $\beta_i \in [1,\gamma]$ such that $(2C)^{-1/\alpha} \delta^{\beta_i} < |U_i| \leq \delta^{\beta_i}$. 
Then $\delta^{\alpha \gamma}/2 < |f(U_i)| \leq C\delta^{\alpha \beta_i} \leq C\delta^\alpha$. %
There exists a collection of 
\[ M(2C)^a \delta^{\alpha (\beta_i - \gamma)a} \leq M(2C)^a |U_i|^{\alpha a (1-\gamma/\beta_i)} \]  %
or fewer balls, each of diameter at most $\delta^{\alpha \gamma}/2$, which cover $f(U_i) \cap f(F)$. For each ball we can add a point in $Y$ whose distance from the centre of the ball is between $\Phi_1(\delta^{\alpha \gamma})$ and $\Phi_1(\delta^{\alpha \gamma})/c$. Each of the new sets, which we call $\{W_{i,j}\}_j$, will satisfy 
\begin{equation}\label{holderdiambound2}
 \Phi_1(\delta^{\alpha \gamma}) \leq |W_{i,j}| \leq  \delta^{\alpha \gamma}.
 \end{equation}
Moreover, 
\begin{equation}\label{holderubound2}
|W_{i,j}| \leq \delta^{\alpha \gamma} =  (2C)^{\gamma/\beta_i} ((2C)^{-1/\alpha}\delta^{\beta_i})^{\alpha \gamma/\beta_i} \leq (2C)^{\gamma/\beta_i} |U_i|^{\alpha \gamma/\beta_i}. 
\end{equation}
Note that $g(\eta)$ is linear and decreasing in $\eta$, so $t>g(1) \geq g(\eta) \geq g(\gamma) = s/\alpha$ for all $\eta \in [1,\gamma]$, and in particular $t > g(\beta_i)$ for all $i$. Therefore using~\eqref{holderubound1} and~\eqref{holderubound2},  
\begin{align*}
\sum_k |V_k|^t + &\sum_{i,j} |W_{i,j}|^t < \sum_k |V_k|^{s/\alpha} + \sum_{i,j} |W_{i,j}|^{g(\beta_i)} \\
&\leq \sum_k ((C + c^{-1})|U_k|^\alpha)^{s/\alpha} + \sum_i M(2C)^a |U_i|^{\alpha a (1-\gamma/\beta_i)} ((2C)^{\gamma/\beta_i} |U_i|^{\alpha \gamma/\beta_i})^{g(\beta_i)} \\
&\leq (C + c^{-1})^{s/\alpha} \sum_k |U_k|^s + M(2C)^{a + \gamma g(\beta_i)/\beta_i} \sum_i |U_i|^s \\*
&\leq \epsilon,
\end{align*}
where the last equality follows from~\eqref{holdersumbound}. 
Also, $\{V_k\}_k \cup \{W_{i,j}\}_{i,j}$ covers $f(F)$, and noting~\eqref{holderdiambound1} and~\eqref{holderdiambound2}, we have $\overline{\dim}^{\Phi_1} f(F) \leq t$, as required. 
\end{proof}

We make several comments about Theorem~\ref{holder}. 
\begin{itemize}
\item An important special case is when $\gamma = 1/\alpha$ and $\Phi_1 = \Phi$. Then we can conclude 
\[ \overline{\dim}^\Phi f(F) \leq \overline{\dim}^\Phi F + (1-\alpha)\dim_\mathrm{A} f(F). \]
\item Another special case is for the $\Phi_1$ which satisfy~\eqref{phiholdercondition} with $\gamma = 1$, when we can conclude $\overline{\dim}^{\Phi_1} f(F) \leq \alpha^{-1}\overline{\dim}^\Phi F$. %
\item If $\overline{\dim}^\Phi F \geq \alpha \dim_\mathrm{A} f(F)$ (contrary to the assumption of Theorem~\ref{holder}) then the simple bound $\overline{\dim}^\Phi f(F) \leq \alpha^{-1} \overline{\dim}^\Phi F$ follows immediately. %
\item If $\overline{\dim}^\Phi F < \alpha \dim_\mathrm{A} f(F)$ but we only assume  that~\eqref{phiholdercondition} holds along a subsequence of $\delta \to 0^+$, then we can conclude only that 
\[ \underline{\dim}^{\Phi_1} f(F) \leq \frac{\overline{\dim}^\Phi F + \alpha (\gamma - 1)\dim_\mathrm{A} f(F)}{\alpha \gamma}.\]
\end{itemize}

Setting $\Phi(\delta) = \delta^{1/\theta}$ gives a H{\"o}lder distortion estimate for the intermediate dimensions in Corollary~\ref{holderintermediate}. 
For subsets of Euclidean space, Corollary~\ref{holderintermediate} was noted in~\cite[Section~14.2.1 5.]{Falconer2021-2}, and it also follows from the stronger result~\cite[Theorem~3.1]{Burrell2022brownian} which is proven using capacity theoretic methods and dimension profiles, but we include it nonetheless because our proof works for more general metric spaces. 

\begin{cor}\label{holderintermediate}
If $f \colon F \to Y$ is an $\alpha$-H{\"o}lder map with exponent $\alpha \in (0,1]$ and $\dim_\mathrm{A} f(F) < \infty$, then $\overline{\dim}_\theta f(F) \leq \alpha^{-1}\overline{\dim}_\theta F$ and $\underline{\dim}_\theta f(F) \leq \alpha^{-1}\underline{\dim}_\theta F$ for all $\theta \in [0,1]$. 
\end{cor}

\begin{proof} 
These estimates hold for the Hausdorff and lower and upper box dimensions (similar to~\cite[Exercise~2.2 and Proposition~3.3]{Falconer2014}), so assume that $\theta \in (0,1)$ and let $\Phi(\delta) = \Phi_1(\delta) = \delta^{1/\theta}$. 
If $\overline{\dim}_\theta F \geq \alpha \dim_\mathrm{A} f(F)$ then $\overline{\dim}_\theta f(F) \leq \dim_\mathrm{A} f(F) \leq \alpha^{-1} \overline{\dim}_\theta F$. If $\overline{\dim}_\theta F < \alpha \dim_\mathrm{A} f(F)$ then since 
\[ \Phi_1(\delta) = \Phi(\delta) = \delta^{1/\theta} = ((\delta^{1/\alpha})^{1/\theta})^\alpha = \Phi(\delta^{1/\alpha})^\alpha, \]
the case $\gamma = 1$ of Theorem~\ref{holder} gives that $\overline{\dim}_\theta f(F) \leq \alpha^{-1} \overline{\dim}_\theta F$. 
Similarly, the bound for the lower intermediate dimensions follows from the version of Theorem~\ref{holder} for the lower $\Phi$-intermediate dimensions. %
\end{proof}

\subsection{Lipschitz stability}

Recall that a map is \emph{Lipschitz} if it is 1-H\"older, and \emph{bi-Lipschitz} if it is Lipschitz with a Lipschitz inverse. 
Corollary~\ref{philipschitz} shows that the $\Phi$-intermediate dimensions cannot increase under Lipschitz maps. %
We also show that $\overline{\dim}^\Phi$ and $\underline{\dim}^\Phi$ are stable under bi-Lipschitz maps, which is an important property that most notions of dimension satisfy. This shows that the $\Phi$-intermediate dimensions provide further invariants for the classification of sets up to bi-Lipschitz image. 
Bi-Lipschitz stability has already been proven for the Hausdorff and box dimensions in~\cite[Propositions~2.5 and 3.3]{Falconer2014} and, for subsets of $\mathbb{R}^n$, for the intermediate dimensions in~\cite[Lemma~3.1]{Fraser2021-1}.

\begin{cor}\label{philipschitz}
Let $X$ and $Y$ be underlying spaces, let $F \subseteq X$, let $f \colon F \to Y$ be Lipschitz, and assume that $\dim_\mathrm{A} f(F) < \infty$. Then 
\begin{enumerate}
\item We have $\overline{\dim}^\Phi f(F) \leq \overline{\dim}^\Phi F$ and $\underline{\dim}^\Phi f(F) \leq \underline{\dim}^\Phi F$. 

\item\label{bilipschitzlabel} If moreover $f$ is bi-Lipschitz then 
$\overline{\dim}^\Phi f(F) = \overline{\dim}^\Phi F$
and
$\underline{\dim}^\Phi f(F) = \underline{\dim}^\Phi F$. 
\end{enumerate}
\end{cor}
In~\ref{bilipschitzlabel}, the assumption $\dim_\mathrm{A} f(F) < \infty$ is equivalent to $\dim_\mathrm{A} F < \infty$ since the Assouad dimension is stable under bi-Lipschitz maps. 

\begin{proof} 
If $\overline{\dim}^\Phi F \geq \dim_\mathrm{A} f(F)$ then $\overline{\dim}^\Phi f(F) \leq \dim_\mathrm{A} f(F) \leq \overline{\dim}^\Phi F$ by Proposition~\ref{basicbounds}; if $\overline{\dim}^\Phi F < \dim_\mathrm{A} f(F)$ then the case $\alpha = \gamma = 1$, $\Phi_1 = \Phi$, of Theorem~\ref{holder} gives $\overline{\dim}^\Phi f(F) \leq \overline{\dim}^\Phi F$. The proof that $\underline{\dim}^\Phi f(F) \leq \underline{\dim}^\Phi F$ is similar, and 2. follows from 1. %
\end{proof}

\section{A mass distribution principle}\label{masssection}

In this section we prove a mass distribution principle for the $\Phi$-intermediate dimensions and a converse result (a Frostman type lemma), which together give an alternative characterisation of the intermediate dimensions. We then prove some applications regarding product sets and finite stability. 

\subsection{A mass distribution principle}

The mass distribution principle is a useful tool to bound dimensions from below by putting a measure on the set. The original version was for the Hausdorff dimension (see~\cite[page~67]{Falconer2014}), and a version was proved for the intermediate dimensions in~\cite[Proposition~2.2]{Falconer2020}. The following natural generalisation for the $\Phi$-intermediate dimensions holds. 

\begin{lemma}\label{massdistprinc}
 Let $F$ be a subset and let $s,a,c,\delta_0>0$ be positive constants. 
 \begin{enumerate}[label=(\roman*)]
\item If there exists a positive decreasing sequence $\delta_n \to 0$ such that for each $n \in \mathbb{N}$ there exists a Borel measure $\mu_n$ with support $\mathrm{supp}(\mu_n) \subseteq F$ with $\mu_n (\mathrm{supp}(\mu_n)) \geq a$, and such that for every Borel subset $U \subseteq X$ with $\Phi(\delta_n) \leq |U| \leq \delta_n$ we have $\mu_n(U)\leq c |U|^s$, then $\overline{\dim}^\Phi F \geq s$. 

\item If, moreover, for all $\delta \in (0,\delta_0)$ there exists a Borel measure $\mu_\delta$ with support $\mathrm{supp}(\mu_\delta) \subseteq F$ with $\mu_\delta (\mathrm{supp}(\mu_\delta)) \geq a$, and such that for every Borel subset $U \subseteq X$ with $\Phi(\delta) \leq |U| \leq \delta$ we have $\mu_\delta(U)\leq c |U|^s$, then $\underline{\dim}^\Phi F \geq s$. 
\end{enumerate}
\end{lemma}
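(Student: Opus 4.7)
The plan is to adapt the standard mass distribution argument to the restricted-cover setting of the $\Phi$-intermediate dimensions. The core observation is purely measure-theoretic: if $\mu$ is any Borel measure on $X$ with $\mbox{supp}(\mu)\subseteq F$, $\mu(\mbox{supp}(\mu))\geq a$, and $\mu(U)\leq c|U|^s$ for every Borel $U$ with diameter in $[\Phi(\delta),\delta]$, then any cover $\{U_i\}$ of $F$ by sets with $\Phi(\delta)\leq |U_i|\leq \delta$ satisfies $\sum_i |U_i|^s \geq a/c$. To see this, I would first discard empty elements and replace each $U_i$ by its closure $\overline{U_i}$ (a Borel set with the same diameter), then use subadditivity to write $a\leq \mu(F)\leq \sum_i \mu(\overline{U_i})\leq c\sum_i |U_i|^s$.

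For part (i), I would argue by contradiction: suppose $\overline{\dim}^\Phi F < s$ and choose $s''\in (\overline{\dim}^\Phi F,s)$. By Definition~\ref{maindefinition} applied with $\epsilon \coloneqq a/(2c)$, there exists $\delta_0>0$ such that for \emph{every} $\delta\in (0,\delta_0)$ one can find a cover $\{U_i\}$ of $F$ with $\Phi(\delta)\leq |U_i|\leq \delta$ and $\sum_i |U_i|^{s''}\leq a/(2c)$. Since $\delta_n\to 0$, pick $n$ large enough that $\delta_n<\delta_0$ and also $\delta_n^{s-s''}<1$; then using the cover corresponding to $\delta_n$ and the measure $\mu_n$,
\[ \frac{a}{c}\leq \sum_i |U_i|^s \leq \delta_n^{s-s''}\sum_i|U_i|^{s''}\leq \delta_n^{s-s''}\cdot \frac{a}{2c} < \frac{a}{2c}, \]
a contradiction.

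For part (ii), I would run the same contradiction but starting from $\underline{\dim}^\Phi F <s$ and $s''\in (\underline{\dim}^\Phi F,s)$. By Definition~\ref{maindefinition}, for every $\delta_0'\in (0,\delta_0)$ there exists $\delta\in (0,\delta_0')$ and a cover $\{U_i\}$ with $\Phi(\delta)\leq|U_i|\leq \delta$ and $\sum_i |U_i|^{s''}\leq a/(2c)$. Choose $\delta_0'$ small enough that $(\delta_0')^{s-s''}<1$; the hypothesis of (ii) then guarantees the measure $\mu_\delta$ exists for the specific $\delta$ produced, and the same chain of inequalities as before produces the contradiction $a/c<a/(2c)$.

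The only mildly delicate point, and the step I would write most carefully, is the measurability issue in the opening inequality: the cover elements $U_i$ in Definition~\ref{maindefinition} need not be Borel. This is dispatched by the closure trick above, using stability of diameter under closure and the fact that the hypothesis is stated for arbitrary Borel $U\subseteq X$ (not just $U\subseteq F$), so $\overline{U_i}$ is admissible. Everything else is a mechanical combination of the definitions of $\overline{\dim}^\Phi$ and $\underline{\dim}^\Phi$ with the elementary estimate $|U_i|^s\leq \delta^{s-s''}|U_i|^{s''}$ obtained from $|U_i|\leq \delta$ and $s>s''$.
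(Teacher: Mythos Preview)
Your proof is correct and uses the same core idea as the paper: pass to closures to make the covering sets Borel, then apply subadditivity of the measure to obtain $\sum_i |U_i|^s \geq a/c$ for every admissible cover at the relevant scale. The paper's version is slightly more direct --- it stops immediately after your ``core observation'' and concludes that $s$ cannot lie in the defining set of the infimum (take any $\epsilon < a/c$), so the auxiliary exponent $s''$ and the contradiction wrapper are unnecessary; but this is a cosmetic difference, not a substantive one.
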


\begin{proof}
We prove (i); the proof of~(ii) is similar. If $n \in \mathbb{N}$ and $\{U_i\}$ is a cover of $F$ such that $\Phi(\delta_n) \leq |U_i| \leq \delta_n$ for all $i$, then the closures $\overline{U_i}$ are Borel, satisfy $\Phi(\delta_n) \leq |\overline{U_i}| = |U_i| \leq \delta_n$, and cover $\mathrm{supp}(\mu_n)$, so
\begin{equation}\label{massdistprinceqn} 
a \leq \mu_n (\mathrm{supp}(\mu_n)) = \mu_n\left(\bigcup_i \overline{U_i}\right) \leq \sum_i \mu_n(\overline{U_i}) \leq c\sum_i |\overline{U_i}|^s = c\sum_i |U_i|^s. 
\end{equation}
 Therefore $\sum_i |U_i|^s \geq a/c > 0$, so $\overline{\dim}^\Phi F \geq s$. 
 \end{proof}
 
 \subsection{A Frostman type lemma}
 
Another powerful tool in fractal geometry and geometric measure theory is Frostman's lemma, dual to the mass distribution principle. 
The following analogue of Frostman's lemma for the $\Phi$-intermediate dimensions holds, generalising~\cite[Proposition~2.3]{Falconer2020} for the intermediate dimensions both to more general functions~$\Phi$ and to more general metric spaces. 
In the proof, we use notation from~\cite[Theorem~2.2]{Hytonen2010}, where $\delta$ denotes a certain constant. 

\begin{lemma}\label{frostman}
Assume that $\dim_\mathrm{A} F < \infty$. 
\begin{enumerate}[label=(\roman*)]
\item If $\overline{\dim}^\Phi F > 0$ then for all $s \in (0,\overline{\dim}^\Phi F)$ there exists a constant $c \in (0,\infty)$ such that for all $\delta_0 >0$ there exist $\delta' \in (0,\delta_0)$ and a Borel probability measure $\mu_{\delta'}$ with finite support $\mathrm{supp}(\mu_{\delta'}) \subseteq F$ 
such that if $x \in X$ and $\Phi(\delta') \leq r \leq \delta'$ then 
\[\mu_{\delta'} (B(x,r)) \leq c r^s.\]

\item If $\underline{\dim}^\Phi F > 0$ then for all $s \in (0,\underline{\dim}^\Phi F)$ there exists $c \in (0,\infty)$ such that for all sufficiently small $\delta'$ there exists a Borel probability measure $\mu_{\delta'}$ with finite support $\mathrm{supp}(\mu_{\delta'}) \subseteq F$ such that if $x \in X$ and $\Phi(\delta') \leq r \leq \delta'$ then $\mu_{\delta'} (B(x,r)) \leq c r^s$. 
\end{enumerate}
\end{lemma}%

\begin{proof}
We prove~(ii); the proof of~(i) is similar. 
The idea of the proof is to put point masses on an analogue of dyadic cubes of size approximately $\Phi(\delta')$ so that the measure of sets with diameter approximately $\Phi(\delta')$ is controlled by the $\Phi$-intermediate dimension of $F$, and then iteratively reduce the masses so that the mass of larger cubes is not too large either. 
The proof is based on the proof of~\cite[Proposition~2.3]{Falconer2020} for the intermediate dimensions, which is in turn based on~\cite[pages~112--114]{Mattila1995}. In~\cite[Proposition~2.3]{Falconer2020}, the assumption that the set $F$ is closed is not necessary as it is not used in the proof. 

The main difference with the proof of~\cite[Proposition~2.3]{Falconer2020} is that in $\mathbb{R}^n$ there are the dyadic cubes to work with, but here we use the fact that $\dim_\mathrm{A} F < \infty$, and use an analogue of the dyadic cubes constructed in~\cite{Hytonen2010} for general doubling metric spaces. 
We now state a special case of \cite[Theorem~2.2]{Hytonen2010}, using notation from that theorem. 
We take the quasi-metric $\rho$ simply to be the metric $d$ restricted to $F$ (so the usual triangle inequality holds and $A_0 = 1$). 
Fix $\delta \coloneqq 1/20$ (in fact any $\delta \in (0,1/12)$ will do). 
Since $\dim_\mathrm{A} F < \infty$, for each $k \in \mathbb{N}$ we have $N_{\delta^k/3}(F) < \infty$. Therefore there exists a finite $\delta^k$-separated subset $\{z_{\alpha}^k\}_\alpha$ of $F$, of maximum possible cardinality. 
Then applying~\cite[Theorem~2.2]{Hytonen2010} with $c_0 = C_0 = 1$, $c_1 = 1/3$, $C_1 = 2$, for each $k \in \mathbb{N}$ there exist subsets $Q^k \coloneqq \{Q^k_\alpha\}_\alpha$ of $F$ such that: 
\begin{enumerate}
\item for all $k \in \mathbb{N}$, $F = \bigcup_\alpha Q_\alpha^k$ with the union disjoint;
\item\label{frostmanballinball} 
$B^F (z_\alpha^k,(20)^{-k}/4) \subseteq B^F (z_\alpha^k,c_1 (20)^{-k}) \subseteq Q_\alpha^k 
\subseteq B^F (z_\alpha^k,C_1 (20)^{-k})
= B^F (z_\alpha^k,2 (20)^{-k})$, recalling that $B^F$ denotes the open ball in $F$;
\item if $k,l \in \mathbb{N}$ with $k \leq l$ then for all $\alpha, \beta$, either $Q_\alpha^k \cap Q_\beta^l = \varnothing$ or $Q_\beta^l \subseteq Q_\alpha^k$, and in the latter case, also $B^F (z_\beta^l,2 (20)^{-l}) \subseteq B^F (z_\alpha^k,2 (20)^{-k})$. We call $Q_\alpha^k$ a \emph{parent} of $Q_\beta^l$. 
\end{enumerate}
We say that $Q_\alpha^k$ is a \emph{dyadic cube} with \emph{centre} $z_\alpha^k$. 

Let $c_2 \in (0,1)$ be such that $X$ is $c_2$-uniformly perfect. 
Suppose $\underline{\dim}^\Phi F > 0$ and let $s \in (0,\underline{\dim}^\Phi F)$. Then there exists $\epsilon > 0$ such that for all sufficiently small $\delta'$ and all covers $\{U_i\}$ of $F$ satisfying $\Phi(\delta') \leq |U_i| \leq \delta'$ for all $i$, %
\begin{equation}\label{frostmansumlarge}
\sum_i |U_i|^s \geq \epsilon.
\end{equation} 
Let $\delta'$ be small enough such that this is the case, and moreover that $\Phi(\delta')/\delta' < c_2/320$. Define $m = m(\delta')$ to be the largest natural number satisfying $\Phi(\delta') \leq \frac{1}{2}(20)^{-m}$. Define the Borel measure $\mu_m$ by 
\[ \mu_m \coloneqq \sum_{\alpha} 20^{-ms} M_{z_\alpha^k} \]
where $M_{z_\alpha^k}$ is a unit point mass at $z_\alpha^k$. 

Let $l$ be the largest integer such that $8(20^{-(m-l)}) \leq \delta'$, noting that $l \geq 1$. In particular, $|Q_{m-l}| \leq \delta'/2$ for all $Q_{m-l} \in Q^{m-l}$. 
In order to reduce the mass of cubes which carry too much measure, having defined $\mu_{m-k}$ for some $k \in \{0,1,\dotsc,l-1\}$, inductively define the Borel measure $\mu_{m-k-1}$, supported on the same finite set as $\mu_m$, by 
\[ \mu_{m-k-1} |_{Q_{m-k-1}} \coloneqq \min\left\{1, \frac{20^{-(m-k-1) s}}{ \mu_{m-k}(Q_{m-k-1})}\right\}\mu_{m-k}|_{Q_{m-k-1}} \]
for all $Q_{m-k-1} \in Q^{m-k-1}$. By construction, if $k \in \{0,1,\dotsc, l\}$ and $Q_{m-k} \in Q^{m-k}$ then 
\begin{equation}\label{frostmanupperbound} \mu_{m-l}(Q_{m-k}) \leq 20^{-(m-k)s} \leq 4^s c_2^{-s} |Q_{m-k}|^s 
\end{equation}
by condition~\ref{frostmanballinball}. %
Moreover, each $Q_m \in Q^m$ satisfies $\mu_m(Q_m) = 20^{-ms}$. If $k \in \{0,1,\dotsc, l-1\}$ and $Q_{m-k} \in Q^{m-k}$ satisfies $\mu_{m-k}(Q_{m-k}) = 20^{-(m-k)s}$ and $Q_{m-k-1} \in Q^{m-k-1}$ is the parent of $Q_{m-k}$, then by the construction of $\mu_{m-k-1}$, either $\mu_{m-k-1}(Q_{m-k}) = 20^{-(m-k)s}$ or $\mu_{m-k-1}(Q_{m-k-1}) = 20^{-(m-k-1)s}$. Therefore for all $y \in F$ there is at least one $k \in \{0,1,\dotsc, l\}$ and $Q_y \in Q^{m-k}$ with $y \in Q_y$ such that 
\begin{equation}\label{frostmanmulowerbound}
\mu_{m-l}(Q_y) = 20^{-(m-k)s} \geq 4^{-s}|Q_y|^s, 
\end{equation}
where the inequality is by condition~\ref{frostmanballinball}. 

 For each $y \in F$, choosing $Q_y$ such that~\eqref{frostmanmulowerbound} is satisfied and moreover $Q_y \in Q^{m-k}$ for the largest possible $k \in \{0,1,\dotsc, l\}$ yields a finite collection of cubes $\{Q_i\}$ which cover $F$. For each $i$, let $z_i$ be the centre of $Q_i$, and by the uniformly perfect condition there exists $p_i \in X$ such that $\Phi(\delta') \leq d(p_i,z_i) \leq \Phi(\delta')/c_2 \leq \delta'/2$. Letting $U_i \coloneqq Q_i \cup \{p_i\}$, by condition~\ref{frostmanballinball} we have $\Phi(\delta') \leq |U_i| \leq \delta'$. Then $\{U_i\}$ covers $F$, and each $|U_i| \leq |Q_i| + \Phi(\delta')/c_2 \leq (1+1/c_2)|Q_i|$. 
 Therefore by~\eqref{frostmansumlarge} and~\eqref{frostmanmulowerbound},  
\begin{equation}\label{frostmangreaterepsilon}
\mu_{m-l}(F) = \sum_i \mu_{m-l}(Q_i) \geq \sum_i 4^{-s}|Q_i|^s \geq 4^{-s}(1+1/c_2)^{-s} \sum_i |U_i|^s \geq 4^{-s}(1+1/c_2)^{-s} \epsilon. 
\end{equation}
Define $\mu_{\delta'} \coloneqq (\mu_{m-l}(F))^{-1} \mu_{m-l}$, which is clearly a Borel probability measure with finite support $\mathrm{supp}(\mu_{\delta'}) \subseteq F$. 

Now, since $\dim_\mathrm{A} F < \infty$ there exists $C \in \mathbb{N}$ such that $N_d(B^F(p,13d)) \leq C$ for all $p \in F$ and $d>0$. 
Let $x \in X$ and $r \in [\Phi(\delta'), \delta']$. 
Let $j = j(r)$ be the largest integer in $\{0,1,\dotsc,l\}$ such that $20^{-(m-j+1)} < r$; such an integer exists by the definition of $m$. 
If $B^X (x,r) \cap F = \varnothing$ then $\mu_{\delta'}(B^X (x,r)) = 0$, so suppose that there exists some $x_1 \in B^X (x,r) \cap F$, so $B^X (x,r) \subseteq B^F(x_1,2r)$. 
Suppose $B^X (x,r) \cap Q_{m-j} \neq \varnothing$ for some $Q_{m-j} \in Q^{m-j}$, with centre $z_{m-j}$, say. 
Then there exists $z \in B^X(x,r) \cap Q_{m-j}$, and by condition~\ref{frostmanballinball} and the definition of $j$,  
\[ d(x_1,z_{m-j}) \leq d(x_1,z) + d(z,z_{m-j}) \leq 2r + 2(20)^{-(m-j)} \leq 6(20)^{-(m-j)}. \]
Therefore $z_{m-j} \in B^F (x_1,6(20)^{-(m-j)})$, and the centres of the cubes in $Q_{m-j}$ which intersect $B^X (x,r)$ form a $20^{-(m-j)}$-separated subset of $B^F (x_1,6(20)^{-(m-j)})$. But 
\[ N_{6(20)^{-(m-j)}/13}(B^F (x_1,6(20)^{-(m-j)})) \leq C.\] 
Therefore there are most $C$ such centres, so at most $C$ elements of $Q^{m-j}$ which intersect $B^X(x,r)$. Therefore by~\eqref{frostmanupperbound} and~\eqref{frostmangreaterepsilon} and the definition of $j$,  
\[ \mu_{\delta'} (B^X(x,r)) = (\mu_{m-l}(F))^{-1} \mu_{m-l}(B^X(x,r)) \leq C (\mu_{m-l}(F))^{-1} 20^{-(m-j)s} \leq c r^s, \]
where $c \coloneqq C 4^s (1+1/c_2)^s \epsilon^{-1} (20)^s$, as required. 
\end{proof}

Putting Lemmas~\ref{massdistprinc} and~\ref{frostman} together, we obtain a useful characterisation of the $\Phi$-intermediate dimensions. 

\begin{thm}\label{massfrostman}
If $\Phi$ is an admissible function and $\dim_\mathrm{A} F < \infty$ then 
\begin{align*} (i) \quad \overline{\dim}^\Phi F = \sup\{ s \geq 0 : &\mbox{ there exists } C \in (0,\infty) \mbox{ such that for all } \delta_1 > 0 \\
&\mbox{ there exists } \delta \in (0,\delta_1) \mbox{ and a Borel probability measure } \mu_\delta  \\
&\mbox{ with support } \mathrm{supp}(\mu_\delta) \subseteq F \mbox{ such that if } U \mbox{ is a Borel subset of } \\
& X \mbox{ which satisfies } \Phi(\delta) \leq |U| \leq \delta \mbox{ then } \mu_\delta(U) \leq C|U|^s \} 
\end{align*}%

\begin{align*} (ii) \quad \underline{\dim}^\Phi F = \sup\{ s \geq 0 : &\mbox{ there exist } C,\delta_1 \in (0,\infty) \mbox{ such that for all } \delta \in (0,\delta_1) \mbox{ there }\\ 
&\mbox{ exists a Borel probability measure } \mu_\delta \mbox{ with support } \\
&\mathrm{supp}(\mu_\delta) \subseteq F \mbox{ such that if } U \mbox{ is a Borel subset satisfying } \\
&\Phi(\delta) \leq |U| \leq \delta \mbox{ then } \mu_\delta(U) \leq C|U|^s \} 
\end{align*}
\end{thm}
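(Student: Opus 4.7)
The plan is to derive the characterisation as a direct consequence of the mass distribution principle (Lemma~\ref{massdistprinc}) and the Frostman type lemma (Lemma~\ref{frostman}), proving each of (i) and (ii) by the two matching inequalities.

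For the ``$\geq$'' direction of each part, I would take any $s$ in the supremum set on the right-hand side. The hypotheses on the measures $\mu_\delta$ there are precisely what Lemma~\ref{massdistprinc} requires with $a=1$: in (i) the clause ``for all $\delta_1>0$ there exists $\delta\in(0,\delta_1)$'' yields a decreasing sequence $\delta_n\to 0$ along which Lemma~\ref{massdistprinc}(i) applies and gives $\overline{\dim}^\Phi F\geq s$; in (ii) the stronger clause furnishes measures at every sufficiently small scale, and Lemma~\ref{massdistprinc}(ii) gives $\underline{\dim}^\Phi F\geq s$. Supremising over admissible $s$ completes this direction.

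For the ``$\leq$'' direction, I would fix $s<\overline{\dim}^\Phi F$ (respectively $s<\underline{\dim}^\Phi F$) and apply Lemma~\ref{frostman} to obtain, at the relevant scales $\delta$, Borel probability measures $\mu_\delta$ on $F$ satisfying the ball bound $\mu_\delta(B(x,r))\leq cr^s$ for all $r\in[\Phi(\delta),\delta]$. To convert this into a diameter bound on $\mu_\delta(U)$ for a general non-empty Borel set $U$ with $\Phi(\delta)\leq|U|\leq\delta$, I would fix any $x\in U$, use $U\subseteq B(x,2|U|)$, and apply the Frostman bound to obtain $\mu_\delta(U)\leq 2^sc|U|^s$, giving the uniform constant $C=2^sc$ required by the right-hand side. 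Letting $s$ approach the $\Phi$-intermediate dimension then yields the reverse inequality.

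The only technical obstacle is that $2|U|$ may slightly exceed $\delta$ when $|U|$ is close to $\delta$, placing it outside the interval on which Lemma~\ref{frostman} supplies a ball bound. I would handle this by the case split $|U|\leq\delta/2$ (for which the above argument goes through verbatim) and $|U|\in(\delta/2,\delta]$; in the latter regime one instead uses $U\subseteq B(x,r)$ for any $r\in(|U|,\delta]$ to obtain $\mu_\delta(U)\leq cr^s$, and letting $r\downarrow|U|$ yields $\mu_\delta(U)\leq c|U|^s$. The boundary value $|U|=\delta$ is then absorbed by the fact that $\mu_\delta$ is a finite sum of point masses (so it places positive mass on only finitely many spheres around any fixed point and the closure $\overline{B(x,|U|)}$ can be approximated by a slightly larger open ball up to arbitrarily small error in $\mu_\delta$-measure). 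These adjustments do not affect the uniform choice of $C$ and the overall structure of the argument is otherwise routine.
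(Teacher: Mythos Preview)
Your overall structure matches the paper's: both directions come from pairing Lemma~\ref{massdistprinc} with Lemma~\ref{frostman}. The only substantive difference is how the ball bound $\mu_\delta(B(x,r))\le cr^s$ is converted into a bound on $\mu_\delta(U)$ for general Borel $U$ with $\Phi(\delta)\le |U|\le \delta$.

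Your case split is not quite complete at the endpoint $|U|=\delta$. In the regime $|U|\in(\delta/2,\delta)$ your limiting argument $r\downarrow|U|$ with $r\in(|U|,\delta]$ is fine, but at $|U|=\delta$ that interval is empty. Your proposed fix---approximate $\overline{B(x,\delta)}$ from outside by $B(x,r)$ with $r>\delta$ and use that $\mu_\delta$ has finite support---does not close the gap: outer approximation gives $\mu_\delta(\overline{B(x,\delta)})=\inf_{r>\delta}\mu_\delta(B(x,r))$, but Lemma~\ref{frostman} supplies no bound on $\mu_\delta(B(x,r))$ once $r>\delta$, so the infimum is uncontrolled. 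The finite-support observation shows the approximation exists but says nothing about its size.

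The paper sidesteps this entirely by using the hypothesis $\dim_\textup{A} F<\infty$, which makes $F$ doubling with some constant $M$: for $x\in U\cap F$ one has $U\cap\mbox{supp}(\mu_\delta)\subseteq B^F(x,2|U|)\subseteq\bigcup_{i=1}^M B^F(x_i,|U|)$, and each $B^X(x_i,|U|)$ has radius exactly $|U|\in[\Phi(\delta),\delta]$, so the Frostman bound applies directly and yields $\mu_\delta(U)\le Mc|U|^s$ uniformly for all such $U$. This both avoids your case split and handles the endpoint cleanly; it is the natural replacement for your boundary argument.
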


\begin{proof}
We prove~(ii) using Lemma~\ref{massdistprinc}~(ii) and Lemma~\ref{frostman}~(ii); (i) follows from Lemma~\ref{massdistprinc}~(i) and Lemma~\ref{frostman}~(i) in a similar way. 
We denote by $\mathit{sup}$ the supremum on the right-hand side of the equation~(ii). 
Fix $y \in F$. 
If $s=0$, then letting $C\coloneqq 1$ and letting $\mu_\delta$ be a unit point mass at $y$ for all sufficiently small $\delta$, we see that $\mathit{sup}$ is well-defined and non-negative. Suppose that $\underline{\dim}^\Phi F > 0$ and let $s \in (0,\underline{\dim}^\Phi F)$. Then by the Frostman type Lemma~\ref{frostman}~(ii), there exist constants $c,\delta_1 \in (0,\infty)$ such that for all $\delta \in (0,\delta_1)$ there exists a Borel probability measure $\mu_{\delta}$ with finite support $\mathrm{supp}(\mu_{\delta}) \subseteq F$ such that if $x \in X$ and $\Phi(\delta) \leq r \leq \delta$ then $\mu_{\delta} (B(x,r)) \leq c r^s$. 
If $U$ is a Borel subset of $X$ satisfying $\Phi(\delta) \leq |U| \leq \delta$, then $U \cap F = \varnothing$ implies $\mu_\delta(U) = 0$. Suppose there exists some $x \in U \cap F$. Let $M$ be the doubling constant of $F$. Then $U \cap \mathrm{supp}(\mu_\delta) \subseteq B(x,2|U|)$, so there exist $x_1,\dotsc,x_M \in B^F(x,2|U|)$ such that $U \cap \mathrm{supp}(\mu_\delta) \subseteq B^F(x,2|U|) \subseteq \bigcup_{i=1}^M B^F(x_i,|U|)$. Therefore 
\[\mu_\delta(U) \leq \sum_{i=1}^M \mu_\delta (B^F(x_i,|U|)) = \sum_{i=1}^M \mu_\delta(B^X(x_i,|U|)) \leq C|U|^s,\]
where $C \coloneqq Mc$. %
Thus $s \leq \mathit{sup}$. 

For the reverse inequality, if $\mathit{sup}> 0$ and $t \in (0, \mathit{sup})$ then by the mass distribution principle Lemma~\ref{massdistprinc}~(ii), $t \leq \underline{\dim}^\Phi F$. 
Therefore if $\max\{\mathit{sup}, \underline{\dim}^\Phi F\} > 0$ then in fact $\mathit{sup} = \underline{\dim}^\Phi F$. But both $\mathit{sup}$ and $\underline{\dim}^\Phi F$ are non-negative, so they must always be equal. 
\end{proof}

\subsection{Product formulae}

It is a well-studied problem to bound the dimensions of product sets in terms of the dimensions of the marginals. Very often, dimensions come in pairs (dim, Dim) which satisfy $\dim F \leq \mbox{Dim} F$ and 
\begin{equation}\label{dimpairineq} \dim E + \dim F \leq \dim (E \times F) \leq \dim E + \mbox{Dim} F \leq \mbox{Dim} (E \times F) \leq \mbox{Dim} E + \mbox{Dim} F \end{equation}
for all `reasonable' sets $E$ and $F$ and `reasonable' metrics on the product space. Examples are (Hausdorff, packing)~\cite{Howroyd1996}, (lower box, upper box)~\cite{Robinson2013}, (lower, Assouad) and (modified lower, Assouad)~\cite[Corollary~10.1.2]{Fraser2020} and, for any fixed $\theta \in (0,1)$, (lower spectrum at $\theta$, Assouad spectrum at $\theta$) and (modified lower spectrum at $\theta$, Assouad spectrum at $\theta$)~\cite[Proposition~4.4]{Fraser2018-2}. In Theorem~\ref{producttheorem} we show that for any given $\Phi$ or $\theta$, (lower $\Phi$-intermediate, upper box) and (lower $\theta$-intermediate, upper box) are also such pairs. However, our upper bound for $\overline{\dim}^\Phi (E \times F)$ is $\overline{\dim}^\Phi E + \overline{\dim}_\mathrm{B} F$, rather than the expected $\overline{\dim}^\Phi E + \overline{\dim}^\Phi F$. Theorem~\ref{producttheorem} generalises~\cite[Proposition~2.5]{Falconer2020} on the intermediate dimensions of product sets to more general functions $\Phi$ and more general metric spaces, and also gives an improved lower bound for $\overline{\dim}^\Phi (E \times F)$ and an improved upper bound for $\underline{\dim}^\Phi (E \times F)$. 
We improve the lower bound for $\overline{\dim}^\Phi (E \times F)$ further for self-products in~(iii). 
A possible direction for future research would be to investigate the sharpness of the bounds in Theorem~\ref{producttheorem}. 

\begin{thm}\label{producttheorem}
Consider uniformly perfect metric spaces $(X,d_X)$ and $(Y,d_Y)$. Let $d_{X \times Y}$ be a metric on $X \times Y$ such that there exist constants $c_1,c_2 \in (0,\infty)$ such that 
\begin{equation}\label{metriclikesup} 
c_1 \max(d_X,d_Y) \leq d_{X \times Y} \leq c_2 \max(d_X,d_Y). 
\end{equation}
Then if $E \subseteq X$ and $F \subseteq Y$ have finite Assouad dimension, then 
 \begin{enumerate}[label=(\roman*)]
 
\item $\overline{\dim}^\Phi E + \underline{\dim}^\Phi F 
\leq \overline{\dim}^\Phi (E \times F) 
\leq \overline{\dim}^\Phi E + \overline{\dim}_\mathrm{B} F$;  

\item $\underline{\dim}^\Phi E + \underline{\dim}^\Phi F 
\leq \underline{\dim}^\Phi (E \times F)
\leq \underline{\dim}^\Phi E + \overline{\dim}_\mathrm{B} F$. 

In the case of self-products,~(i) can be improved to %

\item 
 $2 \overline{\dim}^\Phi F \leq \overline{\dim}^\Phi (F \times F) \leq \overline{\dim}^\Phi F + \overline{\dim}_\mathrm{B} F$. 

\end{enumerate}

\end{thm}

Note that~\eqref{metriclikesup} is the same condition as~\cite[(2.4)]{Robinson2013}, and familiar metrics which satisfy this are $d_{X \times Y} \coloneqq \max\{d_X,d_Y\}$ and $d_{X \times Y} \coloneqq (d_X^p + d_Y^p)^{1/p}$ for $p \in [1,\infty)$. 

\begin{proof}
The idea of the proof of the upper bounds is to consider a cover of one of the sets $E$ with diameters in $[\Phi(\delta),\delta]$, and, for each set $U_i$ in that cover, to form a cover of that other set $F$ with all the diameters approximately equal to $|U_i|$, with the number of sets in this cover controlled by $\overline{\dim}_{\mathrm B} F$. We can then cover the product set with approximate squares with sizes between $\Phi(\delta)$ and $\delta$ to obtain the result. 
The idea of the proof of the lower bounds is to use the Frostman type lemma to put measures on each of the marginal sets such that the measure of sets with diameter in $[\Phi(\delta),\delta]$ is controlled by the $\Phi$-intermediate dimensions of the sets, and then apply the mass distribution principle with the product measure on the product set. 

Since $X$ and $Y$ each have more than one point, so does $X \times Y$. 
A straightforward calculation shows that since $(X,d_X)$ is uniformly perfect, so is $(X \times Y,d_{X \times Y})$. 
Another routine calculation shows that since $E$ and $F$ have finite Assouad dimension, so does $E \times F$. 

(i) We first prove the upper bound of~(i), following the proof of the upper bound in~\cite[Proposition~2.5]{Falconer2020}. Let $\epsilon > 0$. Let $c_p \in (0,1)$ be such that $X \times Y$ is $c_p$-uniformly perfect, and without loss of generality assume $0 < c_p < c_1 < 1 < c_2 < \infty$. 
Since $\dim_\mathrm{A} (E \times F) < \infty$ there exists $A \in \mathbb{N}$ such that $N_r(B^{E \times F} (p,4c_2 r)) \leq A$ for all $p \in E \times F$ and $r > 0$. 
 Let $\Delta > 0$ be such that $\Phi(\delta)/\delta < c_p/2$ for all $\delta \in (0,\Delta)$. 
Fix $s > \overline{\dim}^\Phi E$ and $d > \overline{\dim}_\mathrm{B} F$. Let $\delta_1\in (0,\Delta)$ be such that for all $r \in (0,\delta_1)$ there is a cover of $F$ by $r^{-d}$ or fewer sets, each having diameter at most $r$. Let $\delta_0 \in (0,\delta_1)$ be such that for all $\delta \in (0,\delta_0)$ there exists a cover $\{U_i\}$ of $E$ such that $\Phi(\delta) \leq |U_i| \leq \delta$ for all $i$, and 
\begin{equation}\label{productsetepsbound}
\sum_i |U_i|^s \leq A^{-1} (c_2 + c_p^{-1})^{-(s+d)}\epsilon. 
\end{equation}
For such a cover, for each $i$ let $\{U_{i,j}\}_j$ be a cover of $F$ by $|U_i|^{-d}$ or fewer sets, each having diameter $|U_{i,j}| \leq |U_i|$. 
Then for all $i$ and $j$,  
\begin{equation}\label{productbound}
|U_i \times U_{i,j}| \leq c_2 \max\{|U_i|,|U_{i,j}|\} = c_2|U_i| \leq c_2\delta, 
\end{equation}
so $U_i \times U_{i,j}$ can be covered by $A$ sets $\{U_{i,j,k}\}_k$, each having diameter at most 
\[ \min\{\delta/2,{|U_i \times U_{i,j}|}\}.\] 
We may assume that each of these sets is non-empty, and fix $p_{i,j,k} \in U_{i,j,k}$. Fix $q_{i,j,k} \in X \times Y$ such that $\Phi(\delta) \leq d_{X \times Y}(p_{i,j,k},q_{i,j,k}) \leq \Phi(\delta)/c_p$. Let $V_{i,j,k} \coloneqq U_{i,j,k} \cup \{q_{i,j,k}\}$, so by the triangle inequality 
\begin{equation}\label{productlimits}
 \Phi(\delta) \leq d_{X \times Y}(p_{i,j,k},q_{i,j,k}) \leq |V_{i,j,k}| \leq \delta/2 + \Phi(\delta)/c_p \leq \delta,
 \end{equation}
 since $\delta < \delta_0 < \Delta$. 
Also, by~\eqref{productbound}, 
\begin{equation}\label{productvbound} |V_{i,j,k}| \leq  c_2|U_i| + \Phi(\delta)/c_p \leq (c_2 + c_p^{-1})|U_i|. 
\end{equation}
Therefore by~\eqref{productvbound} and~\eqref{productsetepsbound},  
\[ \sum_{i,j,k} |V_{i,j,k}|^{s+d} \leq \sum_i A|U_i|^{-d} ((c_2 + c_p^{-1})|U_i|)^{s+d} \leq A (c_2 + c_p^{-1})^{s+d} \sum_i |U_i|^s \leq \epsilon.\]
Also \[E \times F \subseteq \bigcup_{i,j,k} U_{i,j,k} \subseteq \bigcup_{i,j,k} V_{i,j,k}. \]
This gives $\overline{\dim}^\Phi (E \times F) \leq s + d$, so $\overline{\dim}^\Phi (E \times F) \leq \overline{\dim}^\Phi E + \overline{\dim}_\mathrm{B} F$. The bound $\overline{\dim}^\Phi (E \times F) \leq \overline{\dim}_\mathrm{B} E + \overline{\dim}^\Phi F$ follows similarly. 

The proof of the lower bound is somewhat similar to the proof of the lower bound in~\cite[Proposition~2.5]{Falconer2020}. First assume $\underline{\dim}^\Phi F = 0$. Fix any $f \in F$. By~\eqref{metriclikesup}, the natural embedding $E \xhookrightarrow{} X \times Y$, $x \mapsto (x,f)$, is bi-Lipschitz onto its image, so by Corollary~\ref{philipschitz} 2. and Proposition~\ref{unprovedprop}~(i),
\[ \overline{\dim}^\Phi E + \underline{\dim}^\Phi F = \overline{\dim}^\Phi E = \overline{\dim}^\Phi (E \times \{f\}) \leq \overline{\dim}^\Phi (E \times F). \]
Now assume that $\overline{\dim}^\Phi E > 0$ and $\underline{\dim}^\Phi F > 0$. Fix $t_1 \in (0,\overline{\dim}^\Phi E)$ and $t_2 \in (0,\underline{\dim}^\Phi F)$. By Lemma~\ref{frostman}~(i) there exists $c_E \in (0,\infty)$ such that for all $\delta_2>0$ there exists $\delta_3 \in (0,\delta_2)$ and a Borel probability measure $\mu_{\delta_3}$ with $\mathrm{supp}(\mu_{\delta_3}) \subseteq E$ such that if $x \in X$ and $\Phi(\delta_3) \leq r_1 \leq \delta_3$ then $\mu_{\delta_3}(B^X(x,r_1)) \leq c_E r_1^{t_1}$. 
By Lemma~\ref{frostman}~(ii) there exist $c_F,\delta_4 \in (0,\infty)$ such that for all $\delta_5 \in (0,\delta_4)$ there exists a Borel probability measure $\nu_{\delta_5}$ with $\mathrm{supp}(\nu_{\delta_5}) \subseteq F$ such that if $y \in Y$ and $\Phi(\delta_5) \leq r_2 \leq \delta_5$ then $\nu_{\delta_5}(B^Y(y,r_2)) \leq c_F r_2^{t_2}$. 
If $\delta_7>0$, then there exists $\delta_6 \in (0,\min\{\delta_7,\delta_4\})$ and Borel probability measures $\mu_{\delta_6}$ and $\nu_{\delta_6}$ as above. Let $\mu_{\delta_6} \times \nu_{\delta_6}$ be the product measure, which satisfies $\mathrm{supp}(\mu_{\delta_6} \times \nu_{\delta_6}) \subseteq E \times F$. 

If $U \subseteq X \times Y$ is Borel and satisfies $\Phi(\delta_6) \leq |U| \leq \delta_6$ then if we fix any $(x,y) \in U$ then 
\begin{equation}\label{productlowerboundsubset} U \subseteq B^{X \times Y}((x,y), 2|U|) \subseteq B^X(x,2|U|/c_1) \times B^Y(y,2|U|/c_1). 
\end{equation}
Fix $x_1,\dotsc,x_C \in E$ and $y_1,\dotsc,y_C \in F$ such that 
\[ \overline{E} \cap B^X(x,2|U|/c_1) \subseteq \bigcup_{i=1}^C B^X(x_i,|U|); \qquad \overline{F} \cap B^Y(y,2|U|/c_1) \subseteq \bigcup_{i=1}^C B^Y(y_i,|U|). \]
Now, 
\begin{align*} (\overline{E} \times \overline{F}) \cap (B^X(x,2|U|/c_1) &\times B^Y(y,2|U|/c_1)) \\
&= (\overline{E} \cap B^X(x,2|U|/c_1)) \times (\overline{F} \cap B^Y(y,2|U|/c_1)) \\
&\subseteq \left(\bigcup_{i=1}^C B^X(x_i,|U|)\right) \times \left(\bigcup_{j=1}^C B^Y(y_j,|U|)\right) \\
&= \bigcup_{i=1}^C \bigcup_{j=1}^C (B^X(x_i,|U|) \times B^Y(y_j,|U|)).
\end{align*}
Then by~\eqref{productlowerboundsubset} and the definition of the product measure,  
\begin{align*}
 (\mu_{\delta_6} \times \nu_{\delta_6}) (U) &\leq (\mu_{\delta_6} \times \nu_{\delta_6}) (B^X(x,2|U|/c_1) \times B^Y(y,2|U|/c_1)) \\
 &\leq (\mu_{\delta_6} \times \nu_{\delta_6}) \left(\bigcup_{i=1}^C \bigcup_{j=1}^C (B^X(x_i,|U|) \times B^Y(y_j,|U|))\right) \\
 &\leq \sum_{i=1}^C \sum_{j=1}^C (\mu_{\delta_6} \times \nu_{\delta_6}) (B^X(x_i,|U|) \times B^Y(y_j,|U|)) \\
 &= C^2 c_E c_F |U|^{t_1+t_2}. 
 \end{align*}
Therefore by the mass distribution principle Lemma~\ref{massdistprinc}~(i), $\overline{\dim}^\Phi (E \times F) \geq t_1 + t_2$, as required. 

(ii) The proof of~(ii) is a straightforward modification of the proof of~(i). 

(iii) The upper bound is just the upper bound of~(i) with $E=F$; the improved bound is the lower bound. Assume $\overline{\dim}^\Phi F > 0$ and let $t \in (0,\overline{\dim}^\Phi F)$. By Lemma~\ref{frostman}~(i) there exists $c_F \in (0,\infty)$ such that for all $\delta_0>0$ there exists $\delta \in (0,\delta_0)$ and a Borel probability measure $\mu_\delta$ with $\mathrm{supp}(\mu_{\delta}) \subseteq F$ such that if $x \in X$ and $\Phi(\delta) \leq r \leq \delta$ then $\mu_{\delta}(B^X(x,r)) \leq c_F r^t$. Then $\mathrm{supp}(\mu_\delta \times \mu_\delta) \subseteq F \times F$, and as in the proof of the lower bound of~(i), if $\Phi(\delta) \leq |U| \leq \delta$ then $(\mu_\delta \times \mu_\delta)(U) \leq C^2 c_F^2 |U|^{2t}$. Therefore by Lemma~\ref{massdistprinc}, $\overline{\dim}^\Phi (F\times F) \geq 2t$, as required. 
\end{proof} 

In the particular case $\Phi(\delta) \coloneqq \frac{\delta}{-\log \delta}$, Proposition~\ref{whenequalsbox} gives $\overline{\dim}^\Phi G = \overline{\dim}_\mathrm{B} G$ and $\underline{\dim}^\Phi G = \underline{\dim}_\mathrm{B} G$ for a subset $G$ of an underlying space $X$. Therefore from~(i) and (ii) we recover the inequalities for the upper and lower box dimensions of product sets in~\cite[Theorem~2.4]{Robinson2013} (which is proven directly, without putting measures on the sets). 
Note also that bounds on the dimensions of products of more than two sets can be obtained by applying Theorem~\ref{producttheorem} iteratively, for example 
\[\overline{\dim}^\Phi (E \times F \times G) \geq \overline{\dim}^\Phi (E \times F) + \underline{\dim}^\Phi G \geq \overline{\dim}^\Phi E +  \underline{\dim}^\Phi F + \underline{\dim}^\Phi G.\]

\subsection{Finite stability}

Our next application of the mass distribution principle is Proposition~\ref{finitestability}, which illustrates an important difference between the upper and lower versions of the dimensions. It was stated in~\cite[Section~14.2.1 2.]{Falconer2021-2} that in Euclidean space, the upper intermediate dimensions are finitely stable but the lower intermediate dimensions are not. 

\begin{prop}\label{finitestability}
Let $\Phi$ be any admissible function. 
\begin{enumerate}[label=(\roman*)]
\item The dimension $\overline{\dim}^\Phi$ is finitely stable: we always have 
\[ \overline{\dim}^\Phi(E \cup F) = \max\{\overline{\dim}^\Phi E, \overline{\dim}^\Phi F\}.\] 

\item The dimension $\underline{\dim}^\Phi$ is \emph{not} finitely stable: there exist compact sets $E,F \subset \mathbb{R}$ such that 
\[\underline{\dim}^\Phi(E \cup F) > \max\{\underline{\dim}^\Phi E, \underline{\dim}^\Phi F\}.\]
\end{enumerate}
\end{prop}

\begin{proof}

It is a straightforward exercise to prove~(i) directly from Definition~\ref{maindefinition}, so we prove only~(ii). 
To do so, we take inspiration from~\cite[Exercises~2.8, 2.9]{Falconer2014}. %
The idea is to construct generalised Cantor sets $E$ and $F$, each of which looks `large' on most scales but `small' on some sequence of scales. We do this in such a way that the sequences of scales where the two sets look small do not even approximately coincide, so for each small $\delta$, either $E$ looks large at \emph{every} scale between $\delta$ and $\Phi(\delta)$, or $F$ looks large at \emph{every} scale between $\delta$ and $\Phi(\delta)$. 

We assume without loss of generality that $\Phi \colon (0,1] \to \mathbb{R}$. 
We inductively define the numbers $k_n \in \{0,1,2,\dotsc \}$ and $e_{10^{k_n}},f_{10^{k_n}}>0$, for $n = 0,1,2,\dotsc$, as follows. Let $k_0\coloneqq 0$, $e_{10^{k_0}}=f_{10^{k_0}}=1$. Having defined $k_n,e_{10^{k_n}},f_{10^{k_n}}$ for some $n = 0,1,2,\dotsc$, there are two cases depending on the parity of $n$. 
If $n$ is even, let $k_{n+1}$ be the smallest integer such that $k_{n+1} > k_n$ and 
\begin{equation}\label{finitestableeqne} (1/3)^{10^{k_{n+1}}-10^{k_n}}f_{10^{k_n}} < \Phi\left((1/5)^{10^{k_n + 1} - 10^{k_n}}(1/3)^{10^{k_n+2}-10^{k_n+1}}e_{10^{k_n}}\right),
\end{equation} 
and let 
\begin{align*} 
&e_{10^{k_{n+1}}} \coloneqq (1/5)^{10^{k_n + 1} - 10^{k_n}}(1/3)^{10^{k_{n+1}}-10^{k_n+1}}e_{10^{k_n}}, \\  &f_{10^{k_{n+1}}} \coloneqq (1/3)^{10^{k_{n+1}}-10^{k_n}}f_{10^{k_n}}. 
\end{align*}
If, on the other hand, $n$ is odd, then let $k_{n+1} > k_n$ be the smallest integer such that 
\begin{equation}\label{finitestableeqnf} (1/3)^{10^{k_{n+1}}-10^{k_n}}e_{10^{k_n}} < \Phi\left((1/5)^{10^{k_n + 1} - 10^{k_n}}(1/3)^{10^{k_n+2}-10^{k_n+1}}f_{10^{k_n}}\right),
\end{equation}
and let 
 \begin{align*} 
 &f_{10^{k_{n+1}}} \coloneqq (1/5)^{10^{k_n + 1} - 10^{k_n}}(1/3)^{10^{k_{n+1}}-10^{k_n+1}}f_{10^{k_n}}, \\
 &e_{10^{k_{n+1}}} \coloneqq (1/3)^{10^{k_{n+1}}-10^{k_n}}e_{10^{k_n}}.
 \end{align*}

Now let $E_1 \coloneqq [0,1]$ and for $j \in \mathbb{N}$, if $10^{k_n} < j \leq 10^{k_n+1}$ for some even $n \in \{0,2,4,\dotsc\}$ then obtain $E_j$ by removing the middle 3/5 of each interval in $E_{j-1}$, otherwise obtain $E_j$ by removing the middle 1/3 of each interval in $E_{j-1}$. 
Let $F_1 \coloneqq [2,3]$ and for $j \in \mathbb{N}$, if $10^{k_n} < j \leq 10^{k_n+1}$ for some odd $n \in \{1,3,5,\dotsc\}$ then obtain $F_j$ from removing the middle 3/5 of each interval in $F_{j-1}$, otherwise obtain $F_j$ by removing the middle 1/3 of each interval in $F_{j-1}$. 
Define $E \coloneqq \bigcap_{j=1}^\infty E_j$ and $F \coloneqq \bigcap_{j=1}^\infty F_j$, noting that both are non-empty and compact subsets of $\mathbb{R}$. 
For all $j \in \mathbb{N}$, let $e_j$ and $f_j$ be the lengths of each of the $2^j$ intervals in $E_j$ and $F_j$ respectively, noting that for each $n \in \mathbb{N}$, the two different definitions that we have given for $e_{10^{k_n}}$ and $f_{10^{k_n}}$ agree by induction. The sequences $e_j$ and $f_j$ lie in $(0,1]$ by induction and converge monotonically to 0. 

We now find an upper bound for $\underline{\dim}_\mathrm{B} E$. Let $n \in \mathbb{N}$ be even. Then $E_{10^{k_n+1}}$ is made up of $2^{10^{k_n+1}}$ intervals, each of length $e_{10^{k_n+1}} = (1/5)^{10^{k_n+1}-10^{k_n}}e_{10^{k_n}} \leq (1/5)^{10^{k_n+1}-10^{k_n}}$. Covering $E$ with these intervals, we see that for all $n \in \mathbb{N}$,  
\begin{equation*} 
\frac{\log N_{e_{10^{k_n+1}}} F (E)}{-\log (e_{10^{k_n+1}})} \leq \frac{\log 2^{10^{k_n+1}}}{\log 5^{10^{k_n+1}-10^{k_n}}} = \frac{10\log 2}{9\log 5}.
 \end{equation*}
Therefore $\underline{\dim}_\mathrm{B} E \leq \frac{10\log 2}{9\log 5}$, and similarly using $F_{10^{k_n+1}}$ for $n$ odd to cover $F$ gives $\underline{\dim}_\mathrm{B} F \leq \frac{10\log 2}{9\log 5}$. Therefore 
\begin{equation}\label{stabilityboxbound}
\frac{10\log 2}{9\log 5} \geq \max\{\underline{\dim}_\mathrm{B} E, \underline{\dim}_\mathrm{B} F\}. 
\end{equation}

To bound $\underline{\dim}^\Phi (E \cup F)$ from below, we use the mass distribution principle. 
Define the sequence $(r_n)_{n \geq 0}$ by 
\[r_n \coloneqq \begin{cases} e_{10^{k_n+2}} = (1/5)^{10^{k_n + 1} - 10^{k_n}}(1/3)^{10^{k_n+2}-10^{k_n+1}}e_{10^{k_n}} & \mbox{if } n \mbox{ even,}\\*
f_{10^{k_n+2}} = (1/5)^{10^{k_n + 1} - 10^{k_n}}(1/3)^{10^{k_n+2}-10^{k_n+1}}f_{10^{k_n}} & \mbox{if } n \mbox{ odd}.
\end{cases}\]
This sequence is strictly decreasing, because if $n \geq 0$ is even then by~\eqref{finitestableeqne}, 
\[ r_{n+1} = f_{10^{k_{n+1}+2}} < f_{10^{k_{n+1}}} < \Phi(e_{10^{k_n+2}}) \leq e_{10^{k_n+2}} = r_n,\]
and similarly if $n$ is odd then $r_{n+1} < r_n$ by~\eqref{finitestableeqnf}. 
Let $\delta \in (0,r_0)$. Define $n_\delta \in \mathbb{N}$ by $r_{n_\delta}\leq \delta < r_{n_\delta - 1}$. 

There are two cases depending on the parity of $n_\delta$. If $n_\delta$ is even, then let $\mu_\delta$ be any Borel probability measure on $F$ which gives mass $2^{-10^{k_{n_\delta + 1}}}$ to each of the $2^{10^{k_{n_\delta + 1}}}$ intervals in $F_{10^{k_{n_\delta + 1}}}$. Let $U$ be a Borel subset of $\mathbb{R}$ with $\Phi(\delta) \leq |U| \leq \delta$. Define $j \in \mathbb{N}$ (depending on $|U|$) by $f_j \leq |U| < f_{j-1}$. By~\eqref{finitestableeqne},  
\[f_{10^{k_{n_\delta+1}}} \leq \Phi(e_{10^{k_{n_\delta}+2}}) = \Phi(r_{n_\delta}) \leq \Phi(\delta) \leq |U| < f_{j-1}, \]
so $j-1 < 10^{k_{n_\delta+1}}$. Also, $f_j \leq |U| \leq \delta < r_{n_\delta - 1} = f_{10^{k_{n_\delta - 1}+2}}$, so in fact $10^{k_{n_\delta-1}+2} < j \leq 10^{k_{n_\delta+1}}$. 
Therefore by the construction of $F$,  
\begin{equation*}%
f_j \geq \left(\frac{1}{5}\right)^{10^{k_{n_\delta-1}+1}}\left(\frac{1}{3}\right)^{j-10^{k_{n_\delta-1}+1}} > \left(\frac{1}{5}\right)^{j/2}\left(\frac{1}{3}\right)^{j/2}.
\end{equation*}
  Since $U$ has diameter less than $f_{j-1}$, it can intersect at most two of the $2^{j-1}$ intervals in $F_{j-1}$. Therefore $U$ can intersect at most $2(2^{10^{k_{n_\delta + 1}} - j})$ of the $2^{10^{k_{n_\delta + 1}}}$ intervals in $F_{10^{k_{n_\delta + 1}}}$. 
  Therefore 
  \begin{equation*}
   \mu_\delta (U) \leq 2(2^{10^{k_{n_\delta + 1}} - j}) (2^{-10^{k_{n_\delta + 1}}}) = 2\left(\left(\frac{1}{3}\right)^{j/2}\left(\frac{1}{5}\right)^{j/2}\right)^{\frac{2\log 2}{\log 15}} \leq 2f_j^{\frac{2\log 2}{\log 15}} \leq 2|U|^{\frac{2\log 2}{\log 15}}. 
  \end{equation*}
  
  If, on the other hand, $n_\delta$ is odd, then let $\mu_\delta$ be a Borel probability measure on $E$ which gives mass $2^{-10^{k_{n_\delta + 1}}}$ to each of the $2^{10^{k_{n_\delta + 1}}}$ intervals in $E_{10^{k_{n_\delta + 1}}}$. As above, if $\Phi(\delta) \leq |U| \leq \delta$ then $\mu_\delta (U) \leq 2|U|^{\frac{2\log 2}{\log 15}}$. Therefore by the mass distribution principle Lemma~\ref{massdistprinc}~(ii) and Proposition~\ref{basicbounds} and~\eqref{stabilityboxbound},  
  \[\underline{\dim}^\Phi(E \cup F) \geq \frac{2\log 2}{\log 15} > \frac{10\log 2}{9\log 5} \geq \max\{\underline{\dim}_\mathrm{B} E, \underline{\dim}_\mathrm{B} F\} \geq \max\{\underline{\dim}^\Phi E, \underline{\dim}^\Phi F\}. \qedhere \]
\end{proof}
It follows from Propositions~\ref{finitestability} and~\ref{basicbounds} and the fact that the Hausdorff dimension of every countable set is $0$ that for any two admissible functions $\Phi_1$ and $\Phi_2$, the three notions of dimension $\dim_\mathrm{H}$, $\underline{\dim}^{\Phi_1}$ and $\overline{\dim}^{\Phi_2}$ are pairwise-distinct, even just working in $\mathbb{R}$. 

Letting $E,F$ be as in Proposition~\ref{finitestability}, applying the mass distribution principle as in the proof of that result at the scales $\delta \coloneqq f_{10^{k_n+2}}$ shows that 
\[ \dim_\mathrm{H} F \leq \underline{\dim}^\Phi F \leq \frac{10\log 2}{9\log 5} < \frac{2\log 2}{\log 15} \leq \overline{\dim}^\Phi F \leq \overline{\dim}_\mathrm{B} F,\]
 and similarly for $E$. 
 Suppose $F$ is the set corresponding to a $\Phi$ satisfying $\frac{\log \delta}{\log \Phi(\delta)} \to 0$ as $\delta \to 0^+$ (for example $\Phi(\delta) = e^{-\delta^{-0.5}}$). Then by Proposition~\ref{compareintermediate}, 
 \[ \dim_\mathrm{H} F < \frac{2\log 2}{\log 15} \leq \overline{\dim}^\Phi F \leq \overline{\dim}_\theta F\]
  for all $\theta \in (0,1]$, so $\overline{\dim}_\theta F$ is discontinuous at $\theta = 0$. Let $\Phi_1$ be an admissible function such that $\Phi_1(f_{10^{k_n+1}}) \leq \Phi_1(f_{10^{k_{n+2}+1}})$ for all sufficiently large $n$. Then for all sufficiently small $\delta$, there exists an odd integer $n(\delta)$ such that $\Phi(\delta) \leq f_{10^{k_{n(\delta)}+1}} \leq \delta$, and the natural cover of $F_{10^{k_{n(\delta)}+1}}$ with $2^{10^{k_{n(\delta)}+1}}$ intervals gives $\overline{\dim}^{\Phi_1} F \leq \frac{10\log 2}{9\log 5} < \frac{2\log 2}{\log 15}$. 
  This gives an indication of how one might construct the admissible functions from Theorem~\ref{recoverinterpolation} below which recover the interpolation for this particular set. 
 
\section{Recovering the interpolation}\label{recoversection} 

It is clear from~\cite[Proposition~2.4]{Falconer2020}, Corollary~\ref{ffkgeneralbounds} and the proof of Proposition~\ref{finitestability} that there are many compact sets with intermediate dimensions discontinuous at $\theta = 0$. For these sets the intermediate dimensions do not fully interpolate between the Hausdorff and box dimensions. 
The main result of this section, Theorem~\ref{recoverinterpolation}, shows that for every compact set there is indeed a family of functions $\Phi$ for which the $\Phi$-intermediate dimensions interpolate all the way between the Hausdorff and box dimensions of the set. %
Moreover, there exists a family of $\Phi$ which interpolates for both the upper and lower versions of the dimensions, and forms a chain in the partial order introduced in Section~\ref{phiintermediatecty}. 
To the best of our knowledge, it is an open problem whether the Assouad-like dimensions studied in~\cite{Garcia2020} fully interpolate between the quasi-Assouad and Assouad dimensions of all sets. 

\begin{thm}\label{recoverinterpolation}
For every non-empty, compact subset $F$, there exists a family 
\[ \{\Phi_s\}_{s \in [\dim_\mathrm{H} F, \overline{\dim}_\mathrm{B} F]}\] 
of admissible functions such that if $s,t$ are such that $\dim_\mathrm{H} F \leq s \leq t \leq \overline{\dim}_\mathrm{B} F$ then the following three conditions hold: 
  \begin{enumerate}[label=(\roman*)]
\item $\overline{\dim}^{\Phi_s} F = s$;
\item $\underline{\dim}^{\Phi_s} F = \min\{s,\underline{\dim}_\mathrm{B} F\}$;
\item $\Phi_s \preceq \Phi_t$. 
\end{enumerate}
\end{thm}
The key definition in the proof is~\eqref{definephis}. The assumption of compactness allows us to take a \emph{finite} subcover in Definition~\ref{hausdorffdef} of Hausdorff dimension, which ensures that $\Phi_s(\delta)$ is well-defined and positive.

\begin{proof} 
Define $\Phi_{\overline{\dim}_\mathrm{B} F}(\delta) \coloneqq \frac{\delta}{-\log \delta}$, so~(i) and (ii) are satisfied for $s = \overline{\dim}_\mathrm{B} F$ by Proposition~\ref{whenequalsbox}. 
We henceforth assume that $\dim_\mathrm{H} F < \overline{\dim}_\mathrm{B} F$, or else there is nothing more to prove.  
The same symbols may take different values in the proofs of parts (i), (ii), (iii). 

(i) %
Let $\Delta \in (0,1/5)$ be such that $0<\frac{\delta}{-\log \delta} < c\delta /3$ for all $\delta \in (0,\Delta)$. %
For now, let $s \in (\dim_\mathrm{H} F, \overline{\dim}_\mathrm{B} F)$. %
For each $\delta \in (0,\Delta)$ there exists a countable cover $\{V_i\}_{i \geq 1}$ of $F$ such that $|V_i| \leq \delta$ for all $i$, and $\sum_i|V_i|^s \leq 2^{-1-2s}$. We may assume that each $V_i$ is non-empty and fix $p_i \in V_i$. Each $V_i \subseteq B(p_i,\max\{2|V_i|,2^{-1-2i/s})\})$, so $\{B(p_i,\max\{2|V_i|,2^{-1-2i/s}\})\}_{i \geq 1}$ is an open cover for $F$. %
Since $F$ is compact, there is a finite subset 
\[ \{U_i\} \subseteq \{B(p_i,\max\{2|V_i|,2^{-1-2i/s}\})\} \] which also covers $F$. %
Now, 
\begin{align*}
\sum_i |U_i|^s \leq \sum_{i \geq 1} |B(p_i,\max\{2|V_i|,2^{-1-2i/s}\})|^s  &\leq \sum_{i=1}^\infty (2^{-2i/s})^s + \sum_{i \geq 1} (4|V_i|)^s \\
&= 1/3 + 4^s \sum_i |V_i|^s \\*
&< 1.
\end{align*} 
Since $\{U_i\}$ is a finite collection of sets, and each has positive diameter as $X$ is uniformly perfect, it follows that $\min_i |U_i| >0$. Therefore $\Phi_s \colon (0,\Delta) \to \mathbb{R}$ is positive and well-defined by  
\begin{equation}\label{definephis}
\begin{aligned}
\Phi_s(\delta) \coloneqq \sup \{ \,x \in [0,\delta/(-\log \delta)] : &\mbox{ there exists a finite cover } \{U_i\} \mbox{ of } F \\* 
&\mbox{ such that } x \leq |U_i| \leq \delta \mbox{ for all } i \mbox{ and } \sum_i|U_i|^s \leq 1 \, \}.
\end{aligned}
\end{equation}
 By construction, $\Phi_s(\delta)/\delta \leq \left(\frac{\delta}{-\log \delta}\right)/\delta \to 0$ as $\delta \to 0^+$, and $\Phi_s$ is increasing in $\delta$, so $\Phi_s$ is admissible. 
  
 We now show that $\overline{\dim}^{\Phi_s} F \leq s$. Given $\eta,\epsilon >0$, define $\delta_0 \coloneqq \min\{ \epsilon^{1/\eta}c^{s/\eta}4^{-s/\eta}, \Delta\}$. Then for all $\delta \in (0, \delta_0)$ there exists a finite cover $\{W_i\}$ of $F$ satisfying $\Phi_s(\delta)/2 \leq |W_i| \leq \delta$ for all $i$, and %
 $\sum_i|W_i|^s \leq 1$. If $|W_i| \geq \Phi_s(\delta)$ then leave $W_i$ in the cover unchanged. If $|W_i| < \Phi_s(\delta)$ then pick any $w_i \in W_i$ and $q_i \in X$ such that $\Phi_s(\delta) \leq d(q_i,w_i) \leq \Phi_s(\delta)/c$. 
 Replace $W_i$ in the cover by $W_i \cup \{q_i\}$. Call the new cover $\{Y_i\}$. By the triangle inequality, 
 \begin{equation*} 
 \Phi_s(\delta) \leq d(q_i,w_i) \leq |W_i \cup \{q_i\}| < \Phi_s(\delta) + \Phi_s(\delta)/c \leq 2\delta/(-c\log \delta)\leq \delta.
 \end{equation*}
 Also 
 \[|W_i \cup \{q_i\}| \leq 2\Phi_s(\delta)/c \leq (4/c)\Phi_s(\delta)/2 \leq 4|W_i|/c. \]
 Therefore 
 \[\sum_i|Y_i|^{s+\eta} \leq \sum_i|Y_i|^s \delta^\eta \leq \delta_0^\eta (4/c)^s \sum_i|W_i|^s \leq \epsilon.\]
  It follows that $\overline{\dim}^{\Phi_s} F \leq s+\eta$, so in fact $\overline{\dim}^{\Phi_s} F \leq s$.
 
 To prove the reverse inequality, assume for a contradiction that $\overline{\dim}^{\Phi_s} F < s$. Then there exists $\delta_1 \in (0,\Delta)$ such that for all $\delta_2 \in (0,\delta_1)$ there exists a cover $\{Z_i\}$ of $F$ such that $\Phi_s(\delta_2) \leq |Z_i| \leq \delta_2$ for all $i$, and $\sum_i |Z_i|^s \leq 3^{-s}c^s$. 
 By Proposition~\ref{whenequalsbox} there exists $\delta_2 \in (0,\delta_1)$ such that $\Phi_s(\delta_2) < \delta_2/(-\log \delta_2)$, and let $\{Z_i\}$ be the cover corresponding to this $\delta_2$, as above. Choose any $z_i \in Z_i$ and let $x_i \in X$ be such that $2|Z_i| \leq d(z_i,x_i) \leq 2|Z_i|/c$. Then by the triangle inequality,  
 \begin{equation*}
 2\Phi_s(\delta_2) \leq 2|Z_i| \leq d(z_i,x_i) \leq |Z_i \cup \{x_i\}| \leq |Z_i| + 2|Z_i|/c \leq (3/c)\delta_2/(-\log \delta_2) <\delta_2. 
 \end{equation*}
 Moreover, $\{Z_i \cup \{x_i\}\}_i$ covers $F$, and 
 \[\sum_i|Z_i \cup \{x_i\}|^s \leq \sum_i (3|Z_i|/c)^s \leq 3^s c^{-s} \sum_i |Z_i|^s \leq 1.\] 
 Therefore 
 \[\Phi_s(\delta_2) \geq \min\{2\Phi_s(\delta_2),\delta_2/(-\log \delta_2)\} > \Phi_s(\delta_2),\]
  a contradiction. Hence $\overline{\dim}^{\Phi_s} F \geq s$ for all $s \in (\dim_\mathrm{H} F, \overline{\dim}_\mathrm{B} F)$, so $\overline{\dim}^{\Phi_s} F = s$. 
 
 Now consider the case $s=\dim_\mathrm{H} F$. Let $N \in \mathbb{N}$ satisfy 
 \[ N > \max\left\{\frac{1}{\overline{\dim}_\mathrm{B} F - \dim_\mathrm{H} F},\frac{1}{\Delta}\right\}. \]
  For $\delta \in (0,1/N]$, let $n \geq N$ be such that $\delta \in (\frac{1}{n+1}, \frac{1}{n}]$, and define 
  \[ \Phi_s(\delta) \coloneqq \min\{\Phi_{s+1/N} (\delta), \dotsc, \Phi_{s+1/n}(\delta)\}.\] 
  Then $\Phi_s(\delta) \leq \Phi_{s+1/N}(\delta) \leq \delta/(-\log \delta)$ for all $\delta \in (0,1/N]$, so $\Phi_s(\delta)/\delta \to 0$ as $\delta \to 0^+$. For all $n \geq N$ and $\delta \in (0,\Delta)$ we have $\Phi_{s+1/n}(\delta)>0$, so if $\delta>0$ then $\Phi_s(\delta)>0$. Moreover, if $\delta_1 \leq \delta_2$, say 
 $\delta_1 \in (\frac{1}{n+1}, \frac{1}{n}]$ and $\delta_2 \in (\frac{1}{m+1}, \frac{1}{m}]$ where $n\geq m \geq N$, then
 \begin{equation*}
 \Phi_s(\delta_1) \leq \min\{\Phi_{s+1/N} (\delta_1), \dotsc, \Phi_{s+1/m}(\delta_1)\} \leq \Phi_s(\delta_2)
 \end{equation*}
  by the monotonicity of each $\Phi_{s+1/i}$. Thus $\Phi_s$ is monotonic, so admissible. 
  For all $n \geq N$ and $\delta \in (0,1/n)$, clearly $\Phi_s(\delta) \leq \Phi_{s+1/n}(\delta)$. 
  Therefore by Proposition~\ref{basicbounds} and Corollary~\ref{hardcomparisoncor}~(i),  
  \[ s = \dim_\mathrm{H} F \leq \underline{\dim}^{\Phi_s} F \leq \overline{\dim}^{\Phi_s} F \leq \overline{\dim}^{\Phi_{s+1/n}} F = s+\frac{1}{n}.\]
   Letting $n \to \infty$ gives $\underline{\dim}^{\Phi_s} F = \overline{\dim}^{\Phi_s} F = s = \dim_\mathrm{H} F$, as required. 
   
   (iii) %
   By construction, (iii) holds since if $\dim_\mathrm{H} F \leq s \leq t \leq \overline{\dim}_\mathrm{B} F$ then $\Phi_s(\delta) \leq \Phi_t(\delta)$ for all sufficiently small $\delta$. %

    (ii) 
    It suffices to prove $\underline{\dim}^{\Phi_s} F \geq \min\{s,\underline{\dim}_\mathrm{B} F\}$, since the opposite inequality follows from Proposition~\ref{basicbounds} and~(i). 
    If $s = \dim_\mathrm{H} F$ or $s = \overline{\dim}_\mathrm{B} F$ then we are done by Propositions~\ref{basicbounds} and~\ref{whenequalsbox}.      
    Suppose $s \in (\dim_\mathrm{H} F, \underline{\dim}_\mathrm{B} F] \cap (\dim_\mathrm{H} F, \overline{\dim}_\mathrm{B} F)$. %
     Assume for a contradiction that $\underline{\dim}^{\Phi_s} F < s$. Let $t,t'$ be such that $\underline{\dim}^{\Phi_s} F < t < t' < s$. Since $t' < \underline{\dim}_\mathrm{B} F$, there exists $\Delta \in (0,\min\{1,|X|\})$ such that $N_\delta (F) \geq \delta^{-t'}$ for all $\delta \in (0,\Delta)$. Reducing $\Delta$ if necessary, we may assume further that $\frac{\delta^{t-t'}}{(-\log \delta)^t} > (1+2/c)^{-s}$ and $-\log \delta \geq 2(1+2/c)$ for all $\delta \in (0,\Delta)$. 
    Since $t > \underline{\dim}^{\Phi_s} F$, for all $\delta_0 > 0$ there exists $\delta \in (0,\min\{\Delta,\delta_0\})$ and a cover $\{U_i\}$ such that $\Phi_s(\delta) \leq |U_i| \leq \delta$ for all $i$, and 
    \begin{equation}\label{longlowersum}
    (1+2/c)^{-s} \geq \sum_i |U_i|^t \geq \sum_i |U_i|^s.
    \end{equation}
     But   
     \[ (1+2/c)^{-s} < \frac{\delta^{t-t'}}{(-\log \delta)^t} = \delta^{-t'}\left(\frac{\delta}{-\log \delta}\right)^t,\]
      so there exists $i$ such that $\delta/(-\log \delta) > |U_i| \geq \Phi_s(\delta)$. %
      If $i$ is such that 
      \[ |U_i| \geq \min\left\{2\Phi_s(\delta),\frac{\delta}{-\log \delta}\right\}\] 
      then leave $U_i$ in the cover unchanged. If, however, $i$ is such that $|U_i| <\min\{2\Phi_s(\delta),\delta/(-\log \delta)\}$ then fix $p_i \in U_i$. Fix $q_i \in X$ such that $2\Phi_s(\delta) \leq d(p,q) \leq 2\Phi_s(\delta)/c$, replace $U_i$ in the cover by $U_i \cup \{q_i\}$, and call the new cover $\{V_i\}_i$. In the case $|U_i| <\min\{2\Phi_s(\delta),\delta/(-\log \delta)\}$, 
    \begin{align*}
     2\Phi_s(\delta) \leq d(p_i,q_i) \leq |U_i \cup \{q_i\}| \leq |U_i| + 2\Phi_s(\delta)/c < 2(1+2/c)\Phi_s(\delta) &\leq \frac{2(1+2/c)\delta}{-\log \delta} \\*
     &\leq \delta.
     \end{align*}
    Then $\min\{\delta/(-\log \delta),2\Phi_s(\delta)\} \leq |V_i| \leq \delta$ for each $i$, and 
    \[ \sum_i |V_i|^s \leq \sum_i ((1+2/c)|U_i|)^s = (1+2/c)^s \sum_i|U_i|^s \leq 1, \]
    by~\eqref{longlowersum}. 
    This means that $\Phi_s(\delta) \geq \min\{2\Phi_s(\delta), \delta/(-\log \delta)\} > \Phi_s(\delta)$, a contradiction. Hence $\underline{\dim}^{\Phi_s} F \geq s$ for all $s \in (\dim_\mathrm{H} F, \underline{\dim}_\mathrm{B} F]$. %
    
   Now suppose $s \in (\underline{\dim}_\mathrm{B} F, \overline{\dim}_\mathrm{B} F)$. %
   By (iii), $\Phi_{\underline{\dim}_\mathrm{B} F} \preceq \Phi_s$, so by what we have just proved, $\min\{s,\underline{\dim}_\mathrm{B} F\} = \underline{\dim}_\mathrm{B} F \leq \underline{\dim}^{\Phi_{\underline{\dim}_\mathrm{B} F}} F \leq \underline{\dim}^{\Phi_s} F$. 
        Together, the cases show that for all $s \in [\dim_\mathrm{H} F,\overline{\dim}_\mathrm{B} F]$ we have $\underline{\dim}^{\Phi_s} F \geq \min\{s,\underline{\dim}_\mathrm{B} F\}$ and hence $\underline{\dim}^{\Phi_s} F = \min\{s,\underline{\dim}_\mathrm{B} F\}$, as required.  
\end{proof}

In the definition~\eqref{definephis} of $\Phi_s$, any positive constant would work in place of the constant 1, so there are many different $\Phi_s$ that will work. The family of dimensions $\overline{\dim}^{\Phi_s}$ and $\underline{\dim}^{\Phi_s}$ may not vary continuously for all sets, as shown by the following proposition. 

\begin{prop}\label{interpolatenotcts}
There exist non-empty, compact subsets $F,G$ of $\mathbb{R}$ such that: 

(i) if $(\Phi_s)_{s \in (\dim_\mathrm{H} F, \underline{\dim}_\mathrm{B} F)}$ is any family of admissible functions such that $\overline{\dim}^{\Phi_s} F = s$ for all $s \in (\dim_\mathrm{H} F, \underline{\dim}_\mathrm{B} F)$ then the function $s \mapsto \overline{\dim}^{\Phi_s} G$ is not continuous on $(\dim_\mathrm{H} F, \underline{\dim}_\mathrm{B} F)$, and

(ii) if $(\Psi_s)_{s \in (\dim_\mathrm{H} F, \underline{\dim}_\mathrm{B} F)}$ is such that $\underline{\dim}^{\Psi_s} F = s$ for all $s \in (\dim_\mathrm{H} F, \underline{\dim}_\mathrm{B} F)$ then the function $s \mapsto \underline{\dim}^{\Psi_s} G$ is not continuous on $(\dim_\mathrm{H} F, \underline{\dim}_\mathrm{B} F)$. 
\end{prop}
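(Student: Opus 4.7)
The plan is to construct explicit compact subsets $F, F_1 \subset \mathbb{R}$ with complementary scale structures, and then to leverage Propositions~\ref{compareintermediate} and \ref{finitestability} together with a direct cover analysis to force the desired discontinuity. I would take $F$ to be the compact set constructed in the proof of Proposition~\ref{finitestability}(ii) for a suitably chosen admissible function such as $\Phi(\delta) = e^{-\delta^{-1/2}}$. By the remarks following that proposition, this $F$ satisfies $\dim_\textup{H} F = 0$, $\underline{\dim}_\textup{B} F > 0$, and $\lim_{\theta \to 0^+}\overline{\dim}_\theta F \geq \tfrac{2\log 2}{\log 15} > 0$, so the upper intermediate dimensions are discontinuous at $\theta = 0$. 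For $F_1$ I would carry out an analogous alternating construction whose ``complicated'' and ``trivial'' scale windows are intercalated with those of $F$, so that $\overline{\dim}^{\Phi} F_1$ is determined by the behaviour of $\Phi$ on scale windows essentially disjoint from those which determine $\overline{\dim}^\Phi F$.

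Second, I would derive structural constraints on any admissible family $(\Phi_s)$ with $\overline{\dim}^{\Phi_s} F = s$. For $s$ strictly less than $\lim_{\theta \to 0^+}\overline{\dim}_\theta F$, Proposition~\ref{compareintermediate}~\ref{lbounduliminf} forces $\liminf_{\delta \to 0^+}(\log\delta)/(\log\Phi_s(\delta)) = 0$, so $\Phi_s$ must decay super-polynomially along some subsequence. Mimicking the weighted-cover computations in the proof of Proposition~\ref{finitestability}(ii) tailored to the specific two-scale construction of $F$, I would show that matching $\overline{\dim}^{\Phi_s} F = s$ exactly pins $\Phi_s$ down, up to the equivalence relation of Section~\ref{phiintermediatecty}, at the complicated scales of $F$, while leaving $\Phi_s$ essentially free at the complicated scales of $F_1$.

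Third, I would use the mass distribution principle (Lemma~\ref{massdistprinc}) and the Frostman-type lemma (Lemma~\ref{frostman}) to evaluate $\overline{\dim}^{\Phi_s} F_1$ as a function of the values of $\Phi_s$ at the $F_1$-scales. Combining this with the constraint from the previous step, the set of values of $\overline{\dim}^{\Phi_s} F_1$ compatible with $\overline{\dim}^{\Phi_s} F = s$ can be shown to lie in a finite discrete set whose members depend on $s$ in a step-like fashion, with jumps at critical thresholds $s_k$ determined by the relative positions of the $F$- and $F_1$-scales. Any selector $s \mapsto \Phi_s$ is therefore forced to produce a discontinuity of $s \mapsto \overline{\dim}^{\Phi_s} F_1$ at some $s_k$. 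Part (ii) follows from the parallel argument using Proposition~\ref{compareintermediate}~\ref{lboundl} and the lower-dimensional versions of these estimates, possibly with the same $F$ and $F_1$ or a minor modification.

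The main obstacle is the third step: establishing the quantitative claim that the discrete range of $\overline{\dim}^{\Phi_s} F_1$ genuinely jumps at each $s_k$ for \emph{every} admissible selector, rather than merely for some. This requires controlling $\overline{\dim}^{\Phi_s} F_1$ from above and below in terms of the behaviour of $\Phi_s$ on the $F_1$-scales alone, uniformly in the freedom left over after imposing the $F$-constraint, and this two-sided uniform control appears to be the technical heart of the argument.
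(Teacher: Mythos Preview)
Your approach is considerably more complicated than necessary, and the third step contains a genuine gap that you yourself identify: you have not shown how to obtain two-sided uniform control of $\overline{\dim}^{\Phi_s} F_1$ in terms of the behaviour of $\Phi_s$ on the $F_1$-scales alone, independently of the remaining freedom in $\Phi_s$. The claim that the admissible values of $\overline{\dim}^{\Phi_s} F_1$ form a discrete, step-like set is asserted but not proved, and it is far from clear that the intercalated construction you sketch would yield this. In particular, ``pinning down $\Phi_s$ up to equivalence at the complicated scales of $F$'' is vague: the equivalence relation of Section~\ref{phiintermediatecty} is quite coarse, and there is no mechanism in your outline that converts this into pointwise control at the $F_1$-scales.

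The paper's proof sidesteps all of this by choosing much simpler sets whose \emph{intermediate} dimensions are already known exactly. Take $F_1 = \{0\} \cup \{1/n : n \in \mathbb{N}\}$, so $\dim_\theta F_1 = \theta/(1+\theta)$, and $F = E \cup F_1$ with $E$ any compact countable set having $\underline{\dim}_\textup{B} E = \dim_\textup{A} E = 1/4$, so $\dim_\theta F = \max\{\theta/(1+\theta), 1/4\}$ for $\theta \in (0,1]$. The point is that $\dim_\theta F$ has a plateau at $1/4$ on $(0,1/3]$ and is strictly increasing thereafter. Now the argument uses only Proposition~\ref{compareintermediate}: if $s > 1/4$, then $\overline{\dim}^{\Phi_s} F = s > \overline{\dim}_{1/3} F$, which by \ref{uboundu} forces $\theta_2(\Phi_s) > 1/3$, whence by \ref{lboundulimsup} $\overline{\dim}^{\Phi_s} F_1 \geq \dim_{1/3} F_1 = 1/4$. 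If $s < 1/4$, then by \ref{lboundulimsup} applied to $F$ one must have $\theta_2(\Phi_s) = 0$ (since $\underline{\dim}_\theta F \geq 1/4$ for all $\theta > 0$), so by \ref{uboundulimsup0} $\overline{\dim}^{\Phi_s} F_1 \leq \overline{\dim}_\theta F_1$ for every $\theta > 0$, giving $\overline{\dim}^{\Phi_s} F_1 = 0$. This produces a jump at $s = 1/4$ for \emph{any} interpolating family, with no scale-by-scale analysis needed. The key idea you are missing is to work entirely through the summary parameters $\theta_1,\theta_2$ of $\Phi_s$ rather than through its detailed scale structure; the plateau in $\dim_\theta F$ is what forces $\theta_2$ to jump, and this jump is then transferred to $F_1$.
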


\begin{proof}

Let $G \coloneqq \{0\} \cup \{1/n : n \in \mathbb{N}\}$, so $\dim_\theta G = \frac{\theta}{1+\theta}$ for all $\theta \in [0,1]$ by~\cite[Proposition~3.1]{Falconer2020}. Let $F = E \cup G$ for any compact countable set $E \subset \mathbb{R}$ with $\underline{\dim}_\mathrm{B} E = \dim_\mathrm{A} E = 1/4$, so as in~\cite[Example~3]{Falconer2020} $\dim_\mathrm{H} F = 0$ and $\dim_\theta F = \max\left\{ \frac{\theta}{1+\theta},1/4\right\}$ for all $\theta \in (0,1]$. 
We now prove (i) using Proposition~\ref{compareintermediate}; the proof of (ii) is similar. Suppose $(\Phi_s)_{s \in (\dim_\mathrm{H} F, \underline{\dim}_\mathrm{B} F)}$ satisfies $\overline{\dim}^{\Phi_s} F = s$ for all $s \in (\dim_\mathrm{H} F, \underline{\dim}_\mathrm{B} F)$. Then if $s>1/4$ then $\overline{\dim}^{\Phi_s} F = s > 1/4 = \dim_{1/3} F$, so by Proposition~\ref{compareintermediate}, 
\[ \limsup_{\delta \to 0^+} \frac{\log \Phi_s(\delta)}{\log \delta} > 1/3\] and $\overline{\dim}^{\Phi_s} G \geq \dim_{1/3} G = 1/4$. For all $s < 1/4$, $\frac{\log \Phi_s(\delta)}{\log \delta} \to 0$ as $\delta \to 0^+$, so since $\dim_\theta G = \frac{\theta}{1+\theta} \to 0$ as $\theta \to 0$, it follows that $\dim^{\Phi_s} G = 0$. Therefore the function $s \mapsto \overline{\dim}^{\Phi_s} G$ is not continuous at $s=1/4$. 
\end{proof}

\section*{Acknowledgements}
\addcontentsline{toc}{section}{Acknowledgements}

This work was completed whilst the author was a PhD student in the School of Mathematics and Statistics at the University of St Andrews. He was financially supported by a Leverhulme Trust Research Project Grant (RPG-2019-034). 
He thanks Kenneth Falconer, István Kolossváry, Lars Olsen, Alex Rutar, P\'eter Varj\'u, Justin Tan, and especially Jonathan Fraser, for helpful discussions and comments. 
He thanks an anonymous referee for helpful comments which improved the exposition of the paper. 

\section*{References}
\addcontentsline{toc}{section}{References}

\printbibliography[heading=none]

\bigskip
\footnotesize
{\parindent0pt
\textsc{Amlan Banaji \\
Department of Mathematical Sciences \\
Loughborough University \\
Loughborough, LE11 3TU \\
United Kingdom \\ 
Email:} \texttt{A.F.Banaji@lboro.ac.uk}\par\nopagebreak
}

\end{document}